\documentclass[12pt,leqno]{article}

\usepackage[cp1250]{inputenc}
\usepackage[OT4]{fontenc}
\usepackage{amsthm}
\usepackage{amsfonts}
\usepackage{amsmath}
\usepackage{amssymb}

%


\newcommand{\Q}{\mathbb{Q}}
\newcommand{\R}{\mathbb{R}}
\newcommand{\rank}{\operatorname{rank}}

\newtheorem{theorem} {Theorem}[section]
\newtheorem{theorem*}{Theorem}
\newtheorem{prop*} {Proposition}
\newtheorem{lemma*}{Lemma}
\newtheorem{lemma}[theorem]{Lemma}
\newtheorem{cor}[theorem]{Corollary}
\newtheorem{cor*}{Corollary}
\newtheorem{prop}[theorem] {Proposition}

\newtheorem{definition*}{Definition}
\newtheorem{rem}[theorem]{Remark}
\newtheorem{ex}[theorem]{Example}

\theoremstyle{definition}


\begin{document}

\begin{center}
{\bf   On the stable set of an analytic gradient flow}\\[1em]
by Zbigniew Szafraniec
\end{center}
{\bf Abstract.} Let $f:\R^n\rightarrow\R$, $n\geq 2$, be a real analytic function. In this paper 
we study the stable set of the gradient flow $\dot{x}=\nabla f(x)$ associated with
a critical point of $f$. In particular we present simple topological conditions 
which imply that this set contains an infinite family of trajectories,  or has a non-empty interior.

\section{Introduction.}
Let $f:\R^n\rightarrow\R$, $n\geq 2$, be an analytic function. According to {\L}ojasiewicz \cite{lojasiewicz},
the limit set of a trajectory of the dynamical system $\dot{x}=\nabla f(x)$ is either empty or contains
a single critical point of $f$. So the family of integral curves which converge to a critical point is a natural object of study
in the theory of gradient dynamical systems.

Let $f:\R^n,0\rightarrow\R,0$ be an analytic function defined in a neighbourhood of the origin, having a critical point at $0$.
We shall write $T(f)$ for the set of integral curves which converge to the origin, and
$S(f)$ for the stable set of the origin, which is the union of all orbits of the solutions
that converge to the origin. By \cite{lojasiewicz}, the stable set in closed
near the origin.

In this paper we study the naturally occurring question whether the set $T(f)$ is infinite or
whether the interior of $S(f)$ is non-empty? (In the planar case these problems are equivalent.)
By Remark \ref{plik_2_1_1}, if $T(f)$ is infinite then it has the cardinality of the continuum.

In some cases the answer is rather obvious. If the hessian matrix of $f$ at the origin has at least
two negative eigenvalues then the dimension of the stable manifold at the origin is $\geq 2$,
and then $T(f)$ is infinite. If the origin is a local strict maximum then $\operatorname{int}S(f)\neq\emptyset$.
It is worth pointing out that according to Moussu \cite[Theorem 3]{moussu} the family $T(f)$ always contain
trajectories which are represented by real analytic curves converging to the origin. In some cases
the family of those analytic curves can be infinite.

Let $\omega:\R^n,0\rightarrow\R,0$ be the homogeneous initial form associated with $f$.
Put $\Omega=S^{n-1}\cap\{\omega<0\}$. 
Applying the Moussu results \cite{moussu}  one may 
show that $\dim\operatorname\,S(f)\geq 2$ if there exists at least one
non-degenerate critical point of $\omega|\Omega$ which is not a local minimum.
In that case the set $T(f)$ is infinite.
Moreover,  if  there exists at least one
non-degenerate critical point of $\omega|\Omega$ which is  a local maximum
then $\operatorname{int}S(f)\neq\emptyset$.

 Let $S_r=S^{n-1}_r\cap\{f<0\}$, where $S^{n-1}_r=\{x\in\R^n\ |\ |x|=r\}$, $0<r\ll 1$.
By \cite{nowelszafraniec1}, \cite{nowelszafraniec2}, if $T(f)$ is finite then each  cohomology group $H^i(S_r)$ is trivial for  $i\geq 1$.
Hence, if there exists $i\geq 1$ with $H^i(S_r)\neq 0$, then $T(f)$ is infinite.

However, there are examples  where $n=2$ and none of the above assumptions holds but $T(f)$ is infinite (see Example \ref{plik_8_3} ).

In the course of proving Thom's gradient conjecture Kurdyka {\it et al.} \cite{kurdykaetal} have applied advanced techniques
of analytic geometry so as to investigate geometric properties of trajectories converging to a the origin. In particular
they have proved that with each such a trajectory one can associate a pair $(\ell,a)\in L'(f)$, where $\ell$ is a characteristic
exponent of $f$, the number $a$ is an asymptotic critical value of $f/|x|^{\ell}$, and $L'(f)$
is a finite subset of $\Q^+\times \R_-$, where $\Q^+$ is the set of positive rationals and
$\R_-$ is the set of negative real numbers.

In \cite{dzedzejszafraniec} (see also Section \ref{rozdzial_22}) there is presented an intrinsic  filtration of $T(f)$ given  in terms 
of characteristic exponents and asymptotic critical values of $f$. Unfortunately, these numbers are difficult
to compute. This is why in this paper we present methods which are more easy to apply.

The first main result of this paper shows that $T(f)$ is infinite if
$\rank\, H^0(S_r)< \rank\, H^0(\Omega)$ (see Theorem \ref{plik_8_1}),
i.e. if $S_r$ has less
connected components than $\Omega$.
As a corollary we shall show that the inequality $\chi(S_r)<\chi(\Omega)$ implies that $T(f)$ is infinite.
It is proper to add that there exist efficient methods of computing those Euler-Poincar\'e characteristics (see \cite{leckiszafraniec1}, \cite{szafraniec12}).
(These results have been earlier presented  in \cite{szafraniec13}.)

The second main result of this paper shows that $\operatorname{int}S(f)\neq\emptyset$  if
$\operatorname{rank} H^{n-2}(S_r)< \operatorname{rank} H^{n-2}(\Omega)$,
where $H^{n-2}(\cdot)$ is the $(n-2)$-th cohomology group with rational coefficients (see Theorem \ref{teor_1}).

Let $\Omega'=S^{n-1}\cap \{\omega\geq 0\}=S^{n-1}\setminus\Omega$, and $S_r'=S_r^{n-1}\cap \{f\geq 0\}=S_r^{n-1}\setminus S_r$, $0<r\ll 1$.
Sets $\Omega'$, $S_r'$ are compact and semianalytic, hence they are triagulable. By the Alexander duality theorem,
if $S_r'$ and $\Omega'$ are non-empty then
$\operatorname{rank}H_0(S_r')=1+\operatorname{rank}H^{n-2}(S_r)$ and
$\operatorname{rank}H_0(\Omega')=1+\operatorname{rank}H^{n-2}(\Omega)$.
Thus, if $S_r'$ has less connected components than $\Omega'$ then the interior of $S(f)$ is non-empty (see Theorem \ref{theor_2}).

Let $f$ be as above.
Assume that $g:\R^n,0\rightarrow\R,0$ an analytic function which is right-equivalent
to $f$.
We shall prove that $T(g)$ is infinite (resp. $\operatorname{int}S(g)\neq\emptyset$)
if $S_r$ has less connected components than $\Omega$ (resp. if $S_r'$ has less
connected components than $\Omega'$) (see Theorem \ref{cor_4_1}).

The paper is organized as follows. In Section \ref{rozdzial_01} we prove sufficient conditions which imply that a compact subset of the sphere has a non-empty interior. In Section \ref{rozdzial_1}
we study the homotopy type of some semi-analytic sets. In Section \ref{rozdzial_22}
we present properties of important geometric invariants
associated with trajectories of the gradient flow.
In Sections \ref{rozdzial_cardinality}, \ref{rozdzial_2}, we prove
the main results (Theorems \ref{plik_8_1}, \ref{teor_1}, \ref{theor_2}). 
Section \ref{rozdzial_3} is devoted to functions right-equivalent to the ones that satisfy assumptions of those theorems.
References \cite{absilkurdyka, bohmetal,  goldstein,  kurdykaetal, kurdykaparusinski, lageman, nowelszafraniec2} present  significant related results and
applications. 

\section{Sets with non-empty interior.}\label{rozdzial_01}

In this section we present some consequences of the Alexander duality theorem. The best reference here is \cite{spanier}.

\begin{lemma}\label{lem_1}
Suppose that $L\subset K$ are closed subsets of $S^{n-1}$, $n\geq 2$, and
$\operatorname{rank}\bar{H}^{n-2}(K)<\operatorname{rank}\bar{H}^{n-2}(L)<\infty$,
where $\bar{H}^{n-2}(\cdot)$ is the $(n-2)$-th \v{C}ech-Alexander-Spanier cohomology group with rational coefficients.
Then the interior of $K$ is non-empty.
\end{lemma}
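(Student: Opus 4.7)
The plan is to argue by contradiction: suppose $\operatorname{int} K = \emptyset$ in $S^{n-1}$, which means $K$ is nowhere dense and hence $S^{n-1}\setminus K$ is dense. I will use Alexander duality to convert the cohomological inequality into a statement about components of the complements, and then use the density of $S^{n-1}\setminus K$ to derive a contradiction.

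First I would dispose of the trivial cases $L=\emptyset$ and $K=S^{n-1}$. If $L$ is empty then the hypothesis $\operatorname{rank}\bar{H}^{n-2}(L)>\operatorname{rank}\bar{H}^{n-2}(K)\geq 0$ fails; and if $K=S^{n-1}$ there is nothing to prove. So I may assume both $K$ and $L$ are proper non-empty compact subsets of $S^{n-1}$. By the Alexander duality theorem for \v{C}ech cohomology (as stated in Spanier's book), for any compact proper subset $A\subset S^{n-1}$ one has
\begin{equation*}
\bar{H}^{n-2}(A)\;\cong\;\tilde{H}_0(S^{n-1}\setminus A),
\end{equation*}
so if $c(X)$ denotes the number of connected components of $X$, then $\operatorname{rank}\bar{H}^{n-2}(A)=c(S^{n-1}\setminus A)-1$. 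The hypothesis therefore becomes
\begin{equation*}
c(S^{n-1}\setminus K)\;<\;c(S^{n-1}\setminus L).
\end{equation*}

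The core of the argument is to show that this cannot happen when $K$ is nowhere dense. Since $L\subset K$, we have $S^{n-1}\setminus K\subset S^{n-1}\setminus L$, so every connected component of $S^{n-1}\setminus K$ is contained in a unique connected component of the open set $S^{n-1}\setminus L$. This gives a well-defined map
\begin{equation*}
\varphi:\pi_0(S^{n-1}\setminus K)\longrightarrow\pi_0(S^{n-1}\setminus L).
\end{equation*}
Pick any component $C$ of $S^{n-1}\setminus L$. Since $C$ is open in $S^{n-1}$ and $K$ is nowhere dense, $C\setminus K=C\cap(S^{n-1}\setminus K)$ is non-empty (in fact dense in $C$), so at least one component of $S^{n-1}\setminus K$ is contained in $C$. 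Thus $\varphi$ is surjective, yielding $c(S^{n-1}\setminus K)\geq c(S^{n-1}\setminus L)$, which contradicts the inequality obtained from Alexander duality.

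No step looks truly deep: the only thing requiring care is the correct bookkeeping between reduced and unreduced $H_0$ and making sure the duality applies to the possibly wild compact sets $K,L$ (this is precisely why one uses \v{C}ech--Alexander--Spanier cohomology rather than singular cohomology). The essential mechanism — that a nowhere dense closed set cannot decrease the component count of the complement of a smaller closed set — is elementary point-set topology.
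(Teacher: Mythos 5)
Your proof is correct and follows essentially the same route as the paper: Alexander duality turns the rank hypothesis into a comparison of the number of connected components of $S^{n-1}\setminus K$ and $S^{n-1}\setminus L$, and your surjectivity-of-$\varphi$ argument under the assumption $\operatorname{int}K=\emptyset$ is just the contrapositive form of the paper's pigeonhole step (the paper assumes every component $U_i$ of $S^{n-1}\setminus L$ meets $S^{n-1}\setminus K$ and concludes that some whole $U_i$ lies in $K$). One small bookkeeping point you already flagged: for $n=2$ the displayed formula $\operatorname{rank}\bar{H}^{n-2}(A)=c(S^{n-1}\setminus A)-1$ is stated with unreduced $\bar{H}^{0}$ and is off by one (the correct duality pairs reduced groups), but since the same shift affects $K$ and $L$ alike, the inequality $c(S^{1}\setminus K)<c(S^{1}\setminus L)$ you need still follows, matching the paper's ``similar arguments apply when $n=2$.''
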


\begin{proof} As $\bar{H}^{n-2}(L)\neq 0$ then sets $L$, $K$, $S^{n-1}\setminus L$ are not void.
If $K=S^{n-1}$ then the assertion holds. From now on we assume that $S^{n-1}\setminus K\neq\emptyset$ and $n\geq 3$.

By the Alexander duality theorem there are isomorphisms
$$\bar{H}^{n-2}(L)\simeq \tilde{H}_0(S^{n-1}\setminus L)\ , \ \bar{H}^{n-2}(K)\simeq \tilde{H}_0(S^{n-1}\setminus K),$$
where $\tilde{H}_0(\cdot)$ is the $0$-th reduced singular homology group with rational coefficients. 

Then $S^{n-1}\setminus L$ is a disjoint union of open connected components $U_1,\ldots,U_\ell$, 
where $\ell=1+\operatorname{rank}\tilde{H}_0(S^{n-1}\setminus L)=1+\operatorname{rank}\bar{H}^{n-2}(L)$, and
$S^{n-1}\setminus K$ is a disjoint union of open connected components $V_1,\cdots,V_k$,
where $k=1+\operatorname{rank}\tilde{H}_0(S^{n-1}\setminus K)=1+\operatorname{rank}\bar{H}^{n-2}(K)$.

Suppose that $U_i\setminus K\neq \emptyset$ for each $1\leq i\leq\ell$, so that
there are points $p_i\in U_i\setminus K$ and then $p_i\in V_{j(i)}$ for some $1\leq j(i)\leq k$.
As $V_{j(i)}$ is a connected subset of $U_1\cup\ldots\cup U_\ell$, then $V_{j(i)}\subset U_i$.

Because $U_i$ are pairwise disjoint, then  $V_{j(i)}$ are pairwise disjoint too. Hence $k\geq \ell$,
contrary to our claim.
Then at least one open connected component $U_i$ is a subset of $K$.

Similar arguments apply to the case where $n=2$.
\end{proof}

\begin{cor}\label{cor_1}
Suppose that $L\subset K\subset F$, where $L,K$ are compact, $n\geq 2$, 
$\operatorname{rank}\bar{H}^{n-2}(K)<\operatorname{rank}\bar{H}^{n-2}(L)<\infty$,
and $F$ is an $(n-1)$-dimensional manifold homeomorphic to a subset of $S^{n-1}$.
Then the interior of $K$ is non-empty.
\end{cor}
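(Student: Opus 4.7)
The plan is to reduce the statement directly to Lemma \ref{lem_1} by transporting the configuration $L\subset K$ into $S^{n-1}$ via the given embedding and then pulling back.

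Let $h:F\to S^{n-1}$ be a homeomorphism onto its image. Since $F$ is an $(n-1)$-manifold and $h$ is a continuous injection into the $(n-1)$-manifold $S^{n-1}$, Brouwer's invariance of domain theorem implies that $h$ is an open map and, in particular, that $h(F)$ is open in $S^{n-1}$. The images $h(L)$ and $h(K)$ are compact, hence closed, subsets of $S^{n-1}$ with $h(L)\subset h(K)$. Because \v{C}ech--Alexander--Spanier cohomology is a topological invariant, the hypothesis transfers to
\[
\operatorname{rank}\bar H^{n-2}(h(K))<\operatorname{rank}\bar H^{n-2}(h(L))<\infty.
\]

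Lemma \ref{lem_1} then yields a non-empty open subset $U\subset S^{n-1}$ with $U\subset h(K)$. Since $h(K)\subset h(F)$ and $h(F)$ is open in $S^{n-1}$, the set $U$ lies inside $h(F)$, so $h^{-1}(U)$ is a well defined subset of $F$. Being the preimage of an open set under the homeomorphism $h^{-1}\colon h(F)\to F$, the set $h^{-1}(U)$ is open in $F$ and contained in $K$, proving that the interior of $K$ (in $F$, and hence in any ambient space in which $F$ is locally open) is non-empty.

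The only non-routine step is the appeal to invariance of domain: without it one cannot conclude that the interior of $h(K)$ produced by Lemma \ref{lem_1} actually lies in $h(F)$ and therefore pulls back to an open subset of $F$. Everything else is bookkeeping with homeomorphisms and the topological invariance of cohomology, so I expect this step to be the main (albeit modest) obstacle.
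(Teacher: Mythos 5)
Your argument is correct and matches the paper's intended route: the corollary carries no separate proof precisely because one transports $L\subset K$ into $S^{n-1}$ via the embedding and applies Lemma \ref{lem_1}, exactly as you do. One small remark: the invariance-of-domain step is actually superfluous (and would be unavailable if $F$ had boundary, as for the Milnor fibre in the later application), since $U\subset h(K)\subset h(F)$ automatically and $h^{-1}(U)=h^{-1}\bigl(U\cap h(F)\bigr)$ is open in $F$ by the subspace topology alone.
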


\section{Homotopy type of semi-analytic sets.}\label{rozdzial_1}

Let $f:\R^n,0\rightarrow\R,0$ be an analytic function defined in an open neighbourhood
of the origin. Let $\Q^+$ denote the set of positive rationals.
For $\ell\in\Q^+$, $a<0$, $y<0$ and $r>0$ we shall write
$$B_r^n=\{x\in\R^n\ |\ |x|\leq r\}\ ,\ \  S_r^{n-1}=\{x\in\R^n\ |\ |x|=r\},$$
$$V^{\ell,a}=\{x\in\R^n\setminus\{0\}\ |\ f(x)\leq a|x|^\ell\} ,\ $$
$$S_r^{\ell,a}=S_r^{n-1}\cap V^{\ell,a}\ ,\ \ B_r^{\ell,a}=B_r^n\cap V^{\ell,a},\ \ 
F^{\ell,a}(y)=f^{-1}(y)\cap V^{\ell,a},$$
$$D^{\ell,a}(y)= f^{-1}([y,0))\cap V^{\ell,a}=\{ x\in V^{\ell,a}\ |\ y\leq f(x)<0\}.$$

\begin{lemma} \label{plik_6_1} Assume that $\ell\in\Q^+$ and $a<0$.
If $0<-y\ll r\ll 1$ then
the sets $S_r^{\ell,a}$ and $F^{\ell,a}(y)$ are homotopy
equivalent. In particular, the singular cohomology groups $H^*(S_r^{\ell,a})$ and 
$H^*(F^{\ell,a}(y))$ are isomorphic.
\end{lemma}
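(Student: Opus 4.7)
The plan is to construct the homotopy equivalence by flowing along the gradient of $f$. The basic idea is that $S_r^{\ell,a}$ and $F^{\ell,a}(y)$ are both ``slices'' of the wedge $V^{\ell,a}$, and a gradient-like flow provides a natural interpolation between them.

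The key analytic tool is the {\L}ojasiewicz gradient inequality, which produces constants $c>0$ and $\theta\in(0,1)$ such that $|\nabla f(x)|\geq c|f(x)|^\theta$ in a neighborhood of the origin. Since $f(x)\leq a|x|^\ell<0$ on $V^{\ell,a}\setminus\{0\}$ near $0$, the gradient $\nabla f$ is non-vanishing (and bounded below in norm) on the compact portions of $V^{\ell,a}$ we work with. Writing $f=\omega+f_1$ with $\omega$ the homogeneous initial form of degree $d\geq 2$, Euler's identity $x\cdot\nabla\omega=d\omega$ together with $\omega(x)<0$ on $V^{\ell,a}$ close enough to $0$ yields $x\cdot\nabla f(x)<0$ there; so along every trajectory $\gamma_p$ of $\dot x=\nabla f$ starting in $V^{\ell,a}$ near the origin, the value of $f$ strictly increases while $|x|$ strictly decreases.

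For each $p\in S_r^{\ell,a}$ I let $T(p)>0$ denote the first time at which $f(\gamma_p(T(p)))=y$; the {\L}ojasiewicz estimate makes $T(p)$ finite and continuously dependent on $p$. The central quantitative claim is that under $0<-y\ll r\ll 1$ one has $|\gamma_p(T(p))|^\ell\leq y/a$, so that $\gamma_p(T(p))\in F^{\ell,a}(y)$. Granting this, $\Psi(p)=\gamma_p(T(p))$ defines a continuous map $\Psi:S_r^{\ell,a}\to F^{\ell,a}(y)$, and $H(p,s)=\gamma_p(sT(p))$ realizes a homotopy inside $V^{\ell,a}\cap\{f\leq y\}$. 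A parallel argument using the backward flow of $\nabla f$ starting from $F^{\ell,a}(y)$ produces an inverse up to homotopy, giving the asserted homotopy equivalence; the isomorphism of singular cohomology then follows formally.

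The main obstacle is precisely the quantitative estimate $|\gamma_p(T(p))|^\ell\leq y/a$: I must show that by the time $f$ has climbed along $\gamma_p$ from $f(p)\leq ar^\ell$ up to $y$, the trajectory has moved inward enough to remain in the wedge. This involves balancing the {\L}ojasiewicz lower bound $\dot f=|\nabla f|^2\geq c^2|f|^{2\theta}$ against the rate $\tfrac{d}{dt}|x|^2=2x\cdot\nabla f<0$, and uses $-y\ll r$ essentially. If the pure $\nabla f$-flow does not automatically confine trajectories to $V^{\ell,a}$, I would work with a modified vector field---for instance, by projecting $\nabla f$ onto the tangent directions of the level hypersurfaces of $f-a|x|^\ell$---preserving the monotonicity of $f$ while keeping trajectories inside the wedge until the level $\{f=y\}$ is reached.
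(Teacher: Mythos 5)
Your plan has a genuine gap, and it is exactly the point you flag at the end: nothing in the proposal shows that the $\nabla f$-trajectory of a point $p\in S_r^{\ell,a}$ stays in the wedge $V^{\ell,a}$ (or returns to it) by the time it reaches the level $\{f=y\}$; the ``central quantitative claim'' $|\gamma_p(T(p))|^\ell\leq y/a$ is asserted, not proved. Moreover the preliminary step you do offer is incorrect: from $f(x)\le a|x|^\ell$ one cannot conclude $\omega(x)<0$ on $V^{\ell,a}$ near $0$ unless $\ell$ equals the degree $d$ of $\omega$. For $f(x,y)=x^3-y^2$ (so $\omega=-y^2$, $d=2$), $\ell=3$, $a=-1/2$, the points $(x,0)$ with $x<0$ small lie in $V^{\ell,a}$ while $\omega$ vanishes there; so the Euler-identity derivation of $x\cdot\nabla f<0$, and with it the monotone decrease of $|x|$ and the confinement discussion built on it, breaks down. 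Note also that whether a gradient trajectory respects a wedge $\{f\le a|x|^\ell\}$ is precisely the delicate issue governed by characteristic exponents and asymptotic critical values (Section 4 of the paper), so it cannot be taken for granted. The fallback you sketch (projecting $\nabla f$ onto the level sets of $f-a|x|^\ell$) does not repair this: the projected field is tangent to those level sets, so it freezes $f-a|x|^\ell$ rather than merely preserving the inequality; it vanishes where $\nabla f$ is proportional to $\nabla\bigl(f-a|x|^\ell\bigr)$, destroying the monotonicity of $f$ needed for $T(p)$ to be defined; and patching it with $\nabla f$ in the interior of the wedge while retaining continuity and the needed estimates amounts to constructing a controlled vector field, i.e.\ redoing by hand the stratified triviality theory you were trying to avoid. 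Even granting a well-behaved flow, a homotopy inverse needs more than ``a parallel argument with the backward flow'': you must show the backward endpoints land in $S_r^{\ell,a}$ (again a confinement statement) and that both composites are homotopic to the identity within the wedge.

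For comparison, the paper avoids all dynamics: on $V^{\ell,a}\cup\{0\}$ near the origin one has $|x|^{1/2}\ge|f(x)|\ge|a|\cdot|x|^\ell$, so $|x|$ and $f$ restricted to the wedge are proper, and local triviality of proper analytic maps between semi-analytic sets (Hardt, Goresky--MacPherson, Hironaka) makes $|x|:B_{r_0}^{\ell,a}\to(0,r_0]$ and $f:D^{\ell,a}(y_0)\to[y_0,0)$ trivial fibrations; the sandwich of inclusions $D^{\ell,a}(y)\subset B_r^{\ell,a}\subset D^{\ell,a}(y_0)\subset B_{r_0}^{\ell,a}$ then gives the homotopy equivalence at once. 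If you insist on a flow-based proof, you would in effect have to build such a controlled vector field yourself, which is substantially more work than invoking the triviality theorem.
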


\begin{proof} For $x\in  V^{\ell,a}\cup\{0\}$ lying sufficiently close to the origin we have $|x|^{1/2}\geq |f(x)|\geq |a|\cdot |x|^\ell$,
so that in particular functions $f(x)$, $|x|^2$  restricted to this set are proper.
According to the local triviality of proper analytic mappings between semi-analytic sets (see \cite{goreskymacpherson, hardt, hironaka}), they are locally trivial.
So there is $r_0>0$ such that $|x|:B_{r_0}^{\ell,a}\rightarrow (0,r_0]$ is a trivial fibration.
Hence the inclusion $S_r^{\ell,a}\subset B_r^{\ell,a}$ is a homotopy equivalence
for  each $0<r\leq  r_0$.

By similar arguments, there is $y_0<0$ such that $D^{\ell,a}(y_0)\subset B_{r_0}^{\ell,a}$ and
 $f:D^{\ell,a}(y_0)\rightarrow [y_0,0)$ is
a trivial fibration, so that the inclusion $F^{\ell,a}(y)\subset D^{\ell,a}(y)$
is a homotopy equivalence for each $y_0\leq y<0$.

 So, if $0<-y\ll r\ll 1$ then we may assume that $r\leq r_0$, $y_0\leq y$, and
$$D^{\ell,a}(y)\subset B_r^{\ell,a}\subset D^{\ell,a}(y_0)\subset B_{r_0}^{\ell,a}.$$
As inclusions $D^{\ell,a}(y)\subset D^{\ell,a}(y_0)$ and $B_r^{\ell,a}\subset B_{r_0}^{\ell,a}$
are homotopy equivalencies, then $D^{\ell,a}(y)\subset B_r^{\ell,a}$ is a homotopy
equivalence too. Then $F^{\ell,a}(y)$ is homotopy equivalent to $S_r^{\ell,a}$.
\end{proof}

 For $0<-y\ll r\ll 1$ we shall write
$$ F_r(y)=B_r^n\cap f^{-1}(y)\ ,\ \ S_r=\{x\in S_r^{n-1}\ |\ f(x)<0\}.$$
We call the set $F_r(y)$ the {\it real Milnor fibre}.
According to \cite{milnor}, it is either an $(n-1)$-dimensional compact manifold with boundary or an empty set.
Moreover, the sets $F_r(y)$ and $S_r$   are homotopy equivalent.

\begin{cor}\label{plik_6_2}
If $0<-y\ll r\ll 1$ then the cohomology groups $H^*(S_r)$ and $H^*(F_r(y))$ are isomorphic.
\end{cor}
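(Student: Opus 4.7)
The plan is first to reduce the corollary to the homotopy equivalence $F_r(y)\simeq S_r$ that is asserted (as a classical consequence of Milnor \cite{milnor}) in the paragraph immediately preceding the statement. Since singular cohomology is a homotopy invariant, the isomorphism $H^*(S_r)\cong H^*(F_r(y))$ is then immediate. In this sense the corollary is essentially a bookkeeping step, packaging the known homotopy statement in the form needed later in the paper.

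If instead one wants a derivation leaning only on Lemma \ref{plik_6_1}, I would compare the compact approximations appearing in that lemma with the ``plain'' sets $S_r$ and $F_r(y)$. Fix any $\ell\in\Q^+$. As $a\to 0^-$, the compact sublevel sets $S_r^{\ell,a}=\{x\in S_r^{n-1}:f(x)\leq ar^\ell\}$ form a nested exhaustion of $S_r=\{x\in S_r^{n-1}:f(x)<0\}$. Using the analytic Sard theorem to ensure that $ar^\ell$ is a regular value of $f|_{S_r^{n-1}}$ for generic small $|a|$, together with the gradient flow of $f$ restricted to the sphere, one constructs a deformation retraction of $S_r$ onto $S_r^{\ell,a}$. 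An analogous radial retraction on the level set $f^{-1}(y)$ compares $F^{\ell,a}(y)$ with $F_r(y)$ (here one chooses $a$ so that $(y/a)^{1/\ell}$ is close to $r$). Chaining these homotopy equivalences with the cohomology isomorphism of Lemma \ref{plik_6_1} yields the desired chain
\[
H^*(F_r(y))\cong H^*(F^{\ell,a}(y))\cong H^*(S_r^{\ell,a})\cong H^*(S_r).
\]

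The main technical obstacle is the joint calibration of the four small parameters $\ell, a, r, y$: Lemma \ref{plik_6_1} fixes $\ell$ and $a$ first and then demands $0<-y\ll r\ll 1$, whereas the two auxiliary retractions just described are valid only when $|a|$ is small relative to $r$. I would handle this by fixing $\ell=1$ and a small $a<0$, identifying a radius $r_0$ on which both retractions hold uniformly inside $B_{r_0}^n$, and only then shrinking $r\leq r_0$ and $|y|$ further in accordance with the hypothesis of Lemma \ref{plik_6_1}. With that calibration in hand, the three-step chain above goes through without incident.
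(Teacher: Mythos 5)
Your first paragraph is exactly the paper's (implicit) argument: the corollary is stated without proof because it follows immediately from the homotopy equivalence of $F_r(y)$ and $S_r$ recorded in the preceding paragraph (a consequence of Milnor's work), together with homotopy invariance of singular cohomology. The alternative derivation via Lemma \ref{plik_6_1} is unnecessary for this statement, so no further comparison is needed.
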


\begin{lemma}\label{plik_6_2_1}
If $\ell\in\Q^+$, $a<0$ and $0<-y\ll r\ll 1$ then $F^{\ell,a}(y)=\{x\in F_r(y)\ |\ y\leq a|x|^\ell\}$.
In particular,  $F^{\ell,a}(y)\subset F_r(y)$.
\end{lemma}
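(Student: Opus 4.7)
The plan is to prove the identity $F^{\ell,a}(y)=\{x\in F_r(y)\mid y\leq a|x|^\ell\}$ by double inclusion, treating it essentially as an unwinding of the definitions, with the only subtle point being why a point of $F^{\ell,a}(y)$ automatically lies in the ball $B_r^n$ when $|y|\ll r$.

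For the forward inclusion, I would take $x\in F^{\ell,a}(y)=f^{-1}(y)\cap V^{\ell,a}$. By definition of $V^{\ell,a}$, $x\neq 0$ and $f(x)\leq a|x|^\ell$; since $f(x)=y$, this gives immediately $y\leq a|x|^\ell$. The only thing to check is that $|x|\leq r$, so that $x\in B_r^n\cap f^{-1}(y)=F_r(y)$. From $y\leq a|x|^\ell$ with $a<0$, dividing by $a$ reverses the inequality and yields $|x|^\ell\leq |y|/|a|$, i.e.\ $|x|\leq (|y|/|a|)^{1/\ell}$. Under the regime $0<-y\ll r\ll 1$, the right-hand side can be made smaller than $r$, so indeed $|x|\leq r$. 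This quantitative step is where the hypothesis $0<-y\ll r\ll 1$ is actually used; everything else is formal.

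For the reverse inclusion, I would take $x\in F_r(y)$ satisfying $y\leq a|x|^\ell$. Since $f(0)=0\neq y$ (because $y<0$), we have $x\neq 0$. The inequalities $f(x)=y\leq a|x|^\ell$ place $x$ in $V^{\ell,a}$, and combined with $f(x)=y$ this gives $x\in f^{-1}(y)\cap V^{\ell,a}=F^{\ell,a}(y)$, as required. The final ``In particular'' clause then follows trivially.

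I do not anticipate any real obstacle here: the lemma is essentially a bookkeeping statement relating the notation $F^{\ell,a}(y)$ from the earlier $V^{\ell,a}$-framework to the Milnor-fiber notation $F_r(y)$. The only non-trivial observation is the elementary estimate $|x|\leq (|y|/|a|)^{1/\ell}$ forcing $x$ into $B_r^n$ once $|y|$ is sufficiently small relative to $r$; this is what justifies the asymptotic relation $0<-y\ll r\ll 1$ in the hypothesis.
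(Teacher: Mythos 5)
Your proposal is correct and follows essentially the same route as the paper: both directions are unwound from the definitions, with the forward inclusion using the estimate $|x|^\ell\leq y/a$ (so $|x|\leq r$ once $0<-y\ll r$) and the reverse inclusion being purely formal. Nothing is missing; your explicit remark that $x\neq 0$ in the reverse direction is a harmless extra detail the paper leaves implicit.
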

\begin{proof}
If  $x\in F^{\ell,a}(y)$ then $y=f(x)\leq a|x|^\ell$, and then $|x|^\ell\leq y/a$.
If  $0<-y\ll r\ll 1$ then $|x|\leq r$, and then $x\in B_r^n\cap f^{-1}(y)=F_r(y)$.

If $x\in B_r^n\cap f^{-1}(y)$ and $y\leq a|x|^\ell$, then $x\in f^{-1}(y)\cap V^{\ell,a}=F^{\ell,a}(y)$.
Hence, if $0<-y\ll r\ll 1$ then $\{x\in F_r(y)\ |\ y\leq a|x|^\ell\}\subset F^{\ell,a}(y)$.
\end{proof}

Let $\omega$ be the initial form associated with $f$ and let $g=f-\omega$,
so that $f=\omega+g$. Denote by $d$ the degree of $\omega$. Hence $g=O(|x|^{d+1})$.

\begin{lemma}\label{plik_5_1} If $0<r\ll -a\ll 1$ then sets $S_r^{d,a}=S_r^{n-1}\cap\{f\leq a r^d\}$, 
$S_r^{n-1}\cap\{\omega\leq a r^d\}$ and $\Omega=S^{n-1}\cap\{\omega<0\}$
have the same homotopy type.
\end{lemma}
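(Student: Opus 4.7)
The plan is to use the rescaling $\phi_r:S^{n-1}\to S_r^{n-1}$, $\phi_r(u)=ru$, to bring all three sets onto the unit sphere, and then to compare them there by two separate Morse/perturbation arguments. By homogeneity of $\omega$, $\phi_r^{-1}(S_r^{n-1}\cap\{\omega\leq ar^d\})=A_a:=\{u\in S^{n-1}\mid \omega(u)\leq a\}$, and writing $f=\omega+g$ with $g=O(|x|^{d+1})$ gives $\phi_r^{-1}(S_r^{d,a})=B_{a,r}:=\{u\in S^{n-1}\mid \omega(u)+\tilde g_r(u)\leq a\}$, where $\tilde g_r(u):=g(ru)/r^d$. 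From $g=O(|x|^{d+1})$ and $\nabla g=O(|x|^d)$ one obtains $\|\tilde g_r\|_{C^1(S^{n-1})}=O(r)$ as $r\to 0^+$. It will therefore suffice to show $\Omega\simeq A_a\simeq B_{a,r}$ when $0<r\ll -a\ll 1$.

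The first comparison, $A_a\simeq\Omega$, is a standard Morse-theoretic deformation once one observes that $\omega|_{S^{n-1}}$ has only finitely many critical values: its critical set is a semianalytic subset of the compact analytic manifold $S^{n-1}$, and the image of this set under $\omega$ is a measure-zero subanalytic, hence finite, subset of $\R$. Fix $a_0<0$ so that $[a_0,0)$ avoids the critical values of $\omega|_{S^{n-1}}$. Then for any $a\in(a_0,0)$ the level $\{\omega=a\}$ is a regular hypersurface, and flowing along $-\nabla_{S^{n-1}}\omega/|\nabla_{S^{n-1}}\omega|^2$ in $\{a_0<\omega<0\}$ (with a cut-off near $\{\omega=a\}$) produces a deformation retraction of $\Omega$ onto $A_a$.

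The second comparison, $A_a\simeq B_{a,r}$, is the perturbation step. Fix $a\in(a_0,0)$ and $\eta>0$ with $[a-\eta,a+\eta]\subset(a_0,0)$; by compactness, $|\nabla_{S^{n-1}}\omega|\geq\delta>0$ on $\{|\omega-a|\leq\eta\}$. Take $r$ small enough that $\|\tilde g_r\|_{C^1}<\min(\eta/2,\delta/2)$; this is possible precisely because $r\ll -a$ lets us calibrate $r$ against the $a$-dependent constants $\eta,\delta$. Then the family $\omega_t:=\omega+t\tilde g_r$, $t\in[0,1]$, has $a$ as a regular value throughout, with $\{\omega_t=a\}\subset\{|\omega-a|<\eta\}$ and $|\nabla\omega_t|\geq\delta/2$ there. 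Thom's first isotopy lemma then supplies an ambient isotopy of $S^{n-1}$ taking $B_{a,r}=\{\omega_1\leq a\}$ diffeomorphically onto $A_a=\{\omega_0\leq a\}$.

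The main obstacle I anticipate is the calibration of $r$ against $|a|$ in the perturbation step. As $a\to 0^-$ the constants $\eta,\delta$ controlling the transversality of $\{\omega=a\}$ may both shrink, since the level is approaching the potentially singular zero set of $\omega$. The admissible perturbation size therefore must shrink at a faster rate than $|a|$, which is exactly what the asymmetric hypothesis $0<r\ll -a\ll 1$ encodes.
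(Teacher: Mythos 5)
Your proposal is correct and follows essentially the same route as the paper: rescale to the unit sphere, note that $f(rx)/r^d=\omega+O(r)$ is a small perturbation of $\omega$, use that small negative $a$ is a regular value (Milnor's Cor.~2.8 / Sard plus finiteness of critical values) so that a one-parameter regularity-plus-isotopy argument identifies the perturbed and unperturbed sublevel sets, and finish with the standard gradient deformation retraction of $\Omega$ onto $\{\omega\leq a\}$. The only cosmetic difference is that the paper runs Ehresmann's theorem with $r$ itself as the family parameter $H_r=\omega+rG(\cdot,r)$, whereas you fix $r$ and interpolate linearly in $t$ with explicit $C^1$ bounds; both yield the same conclusion.
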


\begin{proof}
For $r\in\R$ sufficiently close to zero and $x\in S^{n-1}$ we have
$$f(rx)=\omega(rx)+g(rx)=r^d\omega(x)+r^{d+1}G(x,r),$$
where $G(x,r)$ is an analytic function defined in an open neighbourhood of $S^{n-1}\times\{0\}$.
Put $H(x,r)=\omega(x)+r G(x,r)$, and $H_r=H(\cdot,r):S^{n-1}\rightarrow\R$.

By \cite[Corollary 2.8]{milnor}, there exists $a_0<0$ such that any $a_0<a<0$ is a regular value of $\omega|S^{n-1}$.
Hence there exists $r_0>0$ such that $a$ is a regular value of every $H_r$, where $-r_0<r<r_0$.
Then 
$$\{(x,r)\in S^{n-1}\times (-r_0,r_0)\ |\ H(x,r)\leq a\}$$
is an $n$-dimensional manifold with boundary $S^{n-1}\times (-r_0,r_0)\cap H^{-1}(a)$.
By the implicit function theorem, the mapping $(x,r)\mapsto r$ restricted to both above manifolds
is a proper submersion. By Ehresmann's theorem, it is a locally trivial
fibration. Hence if $r$ is sufficiently close to zero then the manifolds
$ S^{n-1}\cap \{\omega\leq a\}=\{x\in S^{n-1}\ |\ H(x,0)\leq a\}$ and 
$S^{n-1}\cap \{ H_r\leq a\}=\{x\in S^{n-1}\ |\ H(x,r)\leq a\}$ are homeomorphic.

 The set $S^{n-1}\cap\{\omega\leq a\}$ is a deformation retract
of $\Omega=S^{n-1}\cap\{\omega<0\}$,
so that these sets have the same homotopy type.

We have $f(rx)=r^d H_r(x)$. Hence $x\in S^{n-1}\cap\{H_r\leq a\}$ if and only if
$rx\in S_r^{n-1}\cap\{f\leq a r^d\}$, 
and the proof is complete.
\end{proof}

\section{Geometric properties of trajectories.} \label{rozdzial_22}
In the beginning of this section we present some results obtained by Kurdyka {\it et al.} \cite{kurdykaetal}, \cite{kurdykaparusinski} in the course of proving Thom's gradient conjecture. In exposition and notation we follow closely these papers.

Let $f:\R^n,0\rightarrow\R,0$ be an analytic function defined in a neighbourhood of the origin,
having a critical point at $0$. The gradient $\nabla f(x)$ splits into its radial component
$\frac{\partial f}{\partial r}(x)\frac{x}{|x|}$ and the spherical one
$\nabla'f(x)=\nabla f(x)-\frac{\partial f}{\partial r}(x)\frac{x}{|x|}$.
We shall denote $\frac{\partial f}{\partial r}$ by $\partial_rf$.

For $\epsilon>0$ define
$W^{\epsilon}=\{x\ |\ f(x)\neq 0\ ,\ \epsilon |\nabla'f|\leq |\partial_r f|\}$.
There exists a finite subset of positive rationals $L(f)\subset\Q^+$ such that for any $\epsilon>0$ and any sequence
$W^{\epsilon}\ni x\rightarrow 0$ there is a subsequence $W^{\epsilon}\ni x'\rightarrow 0$
and $\ell\in L(f)$ such that
$$\frac{|x'|\ \partial_r f(x')}{f(x')}\ \rightarrow \ \ell\ .$$
Elements of $L(f)$ are called {\it characteristic exponents}.

Fix $\ell >0$, not necessarily in $L(f)$, and consider $F=f/|x|^{\ell}$ defined in the complement of the origin.
We say that $a\in\R$ is {\it  an asymptotic critical value} of $F$ at the origin if there exists a sequence $x\rightarrow 0$, $x\neq 0$,
such that
$$  |x|\cdot \left|\nabla   F(x)\right|\ \rightarrow 0\ \ ,\ \ 
F(x)=\frac{f(x)}{|x|^\ell}\ \rightarrow\ a\ . $$
The set of asymptotic critical values of $F$ is finite.

The real number $a\neq 0$ is an asymptotic critical value if and only if there exists a sequence
$x\rightarrow 0$, $x\neq 0$, such that
$$ \frac{|\nabla'f(x)|}{|\partial_r f(x)|}\ \rightarrow 0\ \ ,\ \ 
  \frac{f(x)}{|x|^\ell}\ \rightarrow\ a\ . $$

Hence the set
$$L'(f)\ =\ \{(\ell,a)\ |\ \ell\in L(f), a < 0\ \mbox{is an asymptotic critical value of}\ f/|x|^{\ell}\}$$
is a finite subset of $\Q^+\times \R_-$, where $\R_-$ is the set of negative real numbers.

We shall write $T(f)$ for the set of non-trivial trajectories of the gradient flow
$\dot{x}=\nabla f(x)$ converging to the origin. By Section 6 of \cite{kurdykaetal}, for every such a trajectory
$x(t)$, with $x(t)\rightarrow 0$,
there exists a unique pair $(\ell',a')\in L'(f)$ such that $f(x(t))/|x(t)|^{\ell'} \rightarrow a'$.
There is a natural partition of $T(f)$ associated with $L'(f)$. Namely for $(\ell',a')\in L'(f)$,
$$T^{\ell',a'}(f)=\{ x(t)\in T(f)\ |\ f(x(t))/|x(t)|^{\ell'}\rightarrow a'\ \mbox{ as } x(t)\rightarrow 0\}.$$
In the set $\Q^+\times\R_-$ we can introduce the lexicographic order
$$(\ell',a')\leq (\ell,a)\ \ \mbox{if}\ \ \ell'<\ell, \ \mbox{or}\ \ell'=\ell\ \mbox{and}\ a'\leq a.$$
Take $(\ell,a)\in \Q^+\times\R_-\setminus L'(f)$. 
We  shall write 
$$\tilde{T}^{\ell,a}(f)=\bigcup T^{\ell',a'}(f), \mbox{ where } (\ell',a')<(\ell,a) \mbox{ and }(\ell',a')\in L'(f).$$

According to \cite{nowelszafraniec1}, there are $0<-y\ll r\ll 1$ such that each trajectory  $x(t)\in T(f)$ intersects $F_r(y)$ transversally at exactly one point.
Let $\Gamma(f)\subset F_r(y)$ be the union of all those points. By \cite{lojasiewicz} the set $\Gamma(f)$ is closed subset of $F_r(y)$, hence it is compact.
So there is a natural one-to-one correspondence between trajectories in $T(f)$ and 
points in $\Gamma(f)$. The same way one can define the set $\Gamma^{\ell',a'}(f)\subset F_r(y)$  (resp. $\tilde{\Gamma}^{\ell,a}(f)\subset F_r(y)$)
whose points are in one-to-one correspondence with trajectories from $T^{\ell',a'}(f)$ (resp. $\tilde{T}^{\ell,a}(f)$).
In particular, for $(\ell,a)\in \Q^+\times\R_-\setminus L'(f)$ the set
$$\tilde{\Gamma}^{\ell,a}(f)=\bigcup \Gamma^{\ell',a'}(f), \mbox{ where } (\ell',a')<(\ell,a) \mbox{ and }(\ell',a')\in L'(f),$$
is a subset of $\Gamma(f)$.

By \cite[Theorem 12]{nowelszafraniec1}, \cite[Theorem 6]{dzedzejszafraniec}  and Lemma \ref{plik_6_2_1} we have

\begin{theorem} \label{plik_2_1}  
If $0<-y\ll r\ll 1$ then the inclusion $\Gamma(f)\subset F_r(y)$ induces an isomorphism
$$\bar{H}^*(\Gamma(f))\ \simeq\ H^*(F_r(y)),$$
where $\bar{H}^*(\cdot)$ is the \v{C}ech-Alexander-Spanier cohomology group
and $H^*(\cdot)$ is the singular  cohomology group..
In particular $\Gamma(f)$ has the same (finite) number of connected components as $F_r(y)$.

Moreover, for every $(\ell,a)\in\Q^+\times\R_-\setminus L'(f)$ the set
$\tilde\Gamma^{\ell,a}(f)$ is a compact subset of
$F^{\ell,a}(y)$. The inclusion
 induces an isomorphism
$$\bar{H}^*(\tilde\Gamma^{\ell,a}(f))\ \simeq\ H^*(F^{\ell,a}(y)).$$

\end{theorem}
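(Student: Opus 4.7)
The plan is to derive both parts by combining the two cited theorems with Lemma \ref{plik_6_2_1}.

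The first assertion is essentially the content of \cite[Theorem 12]{nowelszafraniec1}. I would choose $0<-y\ll r\ll 1$ so that every trajectory in $T(f)$ crosses $F_r(y)$ transversally at exactly one point, so that $\Gamma(f)$ is compact by \cite{lojasiewicz}, and so that the inclusion $\Gamma(f)\subset F_r(y)$ induces the \v{C}ech-to-singular cohomology isomorphism of that theorem. The agreement of the numbers of connected components then follows by specializing to $*=0$.

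For the refined statement, fix $(\ell,a)\in\Q^+\times\R_-\setminus L'(f)$. First I would verify that every trajectory $x(t)\in T^{\ell',a'}(f)$ with $(\ell',a')<(\ell,a)$ in the lexicographic order crosses $F_r(y)$ at a point $p$ satisfying $f(p)/|p|^\ell\leq a$, i.e.\ $y\leq a|p|^\ell$, so that $p\in F^{\ell,a}(y)$ by Lemma \ref{plik_6_2_1}. The asymptotic relation $f(x(t))/|x(t)|^{\ell'}\to a'$ together with $(\ell',a')<(\ell,a)$ gives exactly this: if $\ell'<\ell$, then $f(x)/|x|^\ell=(f(x)/|x|^{\ell'})\cdot|x|^{\ell'-\ell}\to-\infty$, while if $\ell'=\ell$, then $f(x)/|x|^\ell\to a'\leq a$. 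Finiteness of $L'(f)$ then allows me to shrink $r$ and $-y$ finitely many times to obtain a single pair that works for every $(\ell',a')<(\ell,a)$ simultaneously.

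Compactness of $\tilde{\Gamma}^{\ell,a}(f)$ follows because it is a finite union of the sets $\Gamma^{\ell',a'}(f)$, each closed in the compact $\Gamma(f)$. The isomorphism $\bar{H}^*(\tilde{\Gamma}^{\ell,a}(f))\simeq H^*(F^{\ell,a}(y))$ is then the content of \cite[Theorem 6]{dzedzejszafraniec}, once the target of the inclusion is identified with $\{x\in F_r(y)\mid y\leq a|x|^\ell\}$ via Lemma \ref{plik_6_2_1}. The main obstacle is the uniform bookkeeping in the inclusion step: the bound $f(p)/|p|^\ell\leq a$ must hold on a single fibre $F_r(y)$ for all trajectories in the finite union $\tilde{T}^{\ell,a}(f)$, and this is ultimately what forces the quantifier order $0<-y\ll r\ll 1$ together with the hypothesis $(\ell,a)\notin L'(f)$, which rules out marginal trajectories whose asymptotic invariant equals $(\ell,a)$.
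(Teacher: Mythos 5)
Your proposal matches the paper exactly: the paper offers no argument beyond citing \cite[Theorem 12]{nowelszafraniec1} for the first isomorphism and \cite[Theorem 6]{dzedzejszafraniec} together with Lemma \ref{plik_6_2_1} for the statement about $\tilde\Gamma^{\ell,a}(f)$, which is precisely the route you take. Your additional glue (the lexicographic-order estimate placing the crossing points in $F^{\ell,a}(y)$ and the compactness of the finite union) is consistent with what those references supply, so the proposal is correct and essentially identical in approach.
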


\section{Cardinality of $T(f)$}\label{rozdzial_cardinality}

The cardinality of the set $T(f)$ is obviously the same as that of $\Gamma(f)$. In this section we shall present  simple topological
conditions which imply that $\Gamma(f)$ and  $T(f)$ are  infinite sets.

We shall write $S(f)$ for the stable set of the origin, which is the union of all orbits of the solutions
that converge to the origin.

\begin{rem}\label{plik_2_1_1}
If $\Gamma(f)$ is infinite then it contains at least one  compact and infinite connected component,
which is obviously not a zero-dimensional space.
If that is the case then the Menger-Urysohn dimension as well as the \v{C}ech-Lebesgue covering dimension of this component is at least one (see \cite{engelking}), sets $\Gamma(f)$
and $T(f)$ have the cardinality of the continuum, and the dimension of the stable set
$S(f)$ is at least two.
\end{rem}

By Lemma \ref{plik_6_1}, Corollary \ref{plik_6_2} and Theorem \ref{plik_2_1} we get

\begin{cor}\label{plik_2_3} There is an isomorphism
$\bar{H}^*(\Gamma(f))\ \simeq\ H^*(S_r)$. 
In particular $\Gamma(f)$ has the same (finite) number of connected components as $S_r$.
If there exists $i\geq 1$ such that $H^i(S_r)\neq 0$ then $T(f)$ is infinite.
So, if $S_r\neq \emptyset$ and the Euler-Poincar\'e characteristic $\chi(S_r)\leq 0$,
then $T(f)$ is infinite.

Moreover, for every $(\ell,a)\in\Q^+\times\R_-\setminus L'(f)$, if $0< r\ll 1$ then
$$\bar{H}^*(\tilde\Gamma^{\ell,a}(f))\ \simeq\ H^*(S_r^{\ell,a}).$$

\end{cor}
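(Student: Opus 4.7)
The plan is to assemble the corollary by chaining the three cited results, since each of its assertions reduces to combining isomorphisms already established. First I would note that Theorem \ref{plik_2_1} provides $\bar{H}^*(\Gamma(f))\simeq H^*(F_r(y))$ for $0<-y\ll r\ll 1$, and Corollary \ref{plik_6_2} gives $H^*(F_r(y))\simeq H^*(S_r)$ in the same range of parameters. Composing these two isomorphisms immediately delivers the first assertion $\bar{H}^*(\Gamma(f))\simeq H^*(S_r)$.

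Next I would derive the count of connected components. Since $\Gamma(f)$ is a compact subset of $F_r(y)$, the rank of $\bar{H}^0(\Gamma(f))$ agrees with the (finite) number of its connected components; on the other side, the rank of $H^0(S_r)$ counts connected components of $S_r$. Thus the first isomorphism in degree zero yields the component count and forces both numbers to be finite.

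For the infiniteness statement, I would argue by contraposition: if $T(f)$ were finite, then $\Gamma(f)$ would be a finite (hence zero-dimensional) set, so $\bar{H}^i(\Gamma(f))=0$ for every $i\geq 1$, and by the isomorphism $H^i(S_r)=0$ for every $i\geq 1$. Hence any nonvanishing $H^i(S_r)$ with $i\geq 1$ forces $T(f)$ to be infinite. The Euler characteristic consequence then follows by a counting argument: if all $H^i(S_r)$ were trivial for $i\geq 1$, the assumption $S_r\neq\emptyset$ would give $\chi(S_r)=\operatorname{rank} H^0(S_r)\geq 1$, contradicting $\chi(S_r)\leq 0$; so some positive-degree cohomology must be nontrivial and the previous case applies.

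Finally, for the relative statement about $(\ell,a)\in\Q^+\times\R_-\setminus L'(f)$, I would invoke the second half of Theorem \ref{plik_2_1}, which says $\bar{H}^*(\tilde\Gamma^{\ell,a}(f))\simeq H^*(F^{\ell,a}(y))$, together with Lemma \ref{plik_6_1}, which provides $H^*(F^{\ell,a}(y))\simeq H^*(S_r^{\ell,a})$ in the stated asymptotic regime. Composition yields the desired isomorphism. No step here is an actual obstacle; the only mildly delicate point is the observation that finiteness of $\Gamma(f)$ entails vanishing of its Čech cohomology in positive degrees, which must be invoked to get the infiniteness implication from a purely cohomological hypothesis.
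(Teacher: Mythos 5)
Your proposal is correct and follows essentially the same route as the paper, which derives the corollary precisely by chaining Lemma \ref{plik_6_1}, Corollary \ref{plik_6_2} and Theorem \ref{plik_2_1} (the paper states it with no further argument beyond that citation). Your explicit handling of the contrapositive for the infiniteness claims and of the Euler characteristic case is just a spelled-out version of what the paper leaves implicit, and it is sound.
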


\begin{ex}\label{plik_2_3_1} The polynomial $f(x,y,z)=x^3+x^2z-y^2$
is weighted homogeneous. Of course $S_r\neq\emptyset$.
By \cite [p.245]{szafraniec12}, the Euler-Poincar\'e characteristic $\chi(S_r^2\cap\{f\geq 0\})=2$.
By the Alexander duality theorem we have $\chi(S_r)=0$. Hence the set $T(f)$ is infinite.
\end{ex}

\begin{prop}\label{plik_7_1}  If $0<-a\ll 1$ then $\bar{H}^*(\tilde\Gamma^{d,a}(f))\simeq H^*(\Omega)$.
If $H^i(\Omega)\neq 0$ for some $i\geq 1$ then $T(f)$ is infinite.
\end{prop}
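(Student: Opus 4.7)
The plan is to chain three earlier results---Theorem \ref{plik_2_1}, Lemma \ref{plik_6_1}, and Lemma \ref{plik_5_1}---to pass from $\tilde\Gamma^{d,a}(f)$ to $\Omega$ via the intermediate spaces $F^{d,a}(y)$ and $S_r^{d,a}$. Specifically, I would choose parameters in the nested regime $0 < -y \ll r \ll -a \ll 1$, which is exactly the regime each lemma requires.

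First I would verify that, for all sufficiently small $-a > 0$, the pair $(d,a)$ lies outside the finite set $L'(f) \subset \Q^+ \times \R_-$. Indeed, if $d \notin L(f)$ this is automatic; if $d \in L(f)$, then there are only finitely many asymptotic critical values of $f/|x|^d$, so any sufficiently small negative $a$ avoids them. Having fixed such an $a$, Theorem \ref{plik_2_1} supplies an isomorphism $\bar{H}^*(\tilde\Gamma^{d,a}(f)) \simeq H^*(F^{d,a}(y))$ for $0 < -y \ll r \ll 1$. Lemma \ref{plik_6_1} then gives $H^*(F^{d,a}(y)) \simeq H^*(S_r^{d,a})$ in the same regime, and Lemma \ref{plik_5_1} finally identifies $H^*(S_r^{d,a}) \simeq H^*(\Omega)$ whenever $0 < r \ll -a \ll 1$. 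Concatenating these three isomorphisms yields $\bar{H}^*(\tilde\Gamma^{d,a}(f)) \simeq H^*(\Omega)$.

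For the second assertion, assume $H^i(\Omega) \neq 0$ for some $i \geq 1$. By the isomorphism just established, $\bar{H}^i(\tilde\Gamma^{d,a}(f)) \neq 0$ for some $i \geq 1$. A finite set is a zero-dimensional compact space, hence its Čech-Alexander-Spanier cohomology vanishes in all positive degrees. Therefore $\tilde\Gamma^{d,a}(f)$ must be infinite, and since $\tilde\Gamma^{d,a}(f) \subset \Gamma(f)$ with $\Gamma(f)$ in one-to-one correspondence with $T(f)$, the set $T(f)$ is infinite.

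The only real subtlety is making sure the various ``small parameter'' conditions are jointly satisfiable: Lemma \ref{plik_5_1} requires $r$ much smaller than $-a$, while Theorem \ref{plik_2_1} and Lemma \ref{plik_6_1} require $-y$ much smaller than $r$. These constraints are compatible, so one simply fixes $a$ first (small enough both to avoid $L'(f)$ and to trigger Lemma \ref{plik_5_1}), then $r$, then $y$ in that order. No deeper difficulty is expected---the proposition is essentially a bookkeeping exercise assembling previously proven facts.
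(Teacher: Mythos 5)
Your proof is correct and follows essentially the same route as the paper: the paper simply cites Corollary \ref{plik_2_3} (which already packages Theorem \ref{plik_2_1} with Lemma \ref{plik_6_1}) together with Lemma \ref{plik_5_1}, notes that $(d,a)\notin L'(f)$ for $0<-a\ll 1$ since $L'(f)$ is finite, and concludes infiniteness of $\tilde\Gamma^{d,a}(f)$ and hence of $T(f)$ exactly as you do. Your only difference is unpacking that corollary into its constituent lemmas and spelling out the parameter ordering, which is a faithful elaboration rather than a different argument.
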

\begin{proof} As $L'(f)$ is finite, if $0<-a\ll  1$ then $(d,a)\not\in L'(f)$.
By  Corollary \ref{plik_2_3}  and Lemma \ref{plik_5_1}, if $0<r\ll -a$ then we have
$$\bar{H}^*(\tilde\Gamma^{d,a}(f))\ \simeq\ H^*(S_r^{d,a})\ \simeq\ H^*(\Omega) .$$
In particular, if $H^i(\Omega)\neq 0$ for some $i\geq 1$ then 
 $\tilde\Gamma^{d,a}(f)$ is infinite. 
Hence $\tilde{T}^{d,a}(f)$, as well as $T(f)$, is infinite.
\end{proof}

\begin{ex} Let $f(x,y,z)=z(x^2+y^2)+x^2y^2 z-z^4$. It is easy to see that $S_r=S_r^2\cap\{f<0\}$
is homeomorphic to a union of two disjoint 2-discs, so that $H^i(S_r)=0$ for $i\geq 1$.
As $\omega=z(x^2+y^2)$, then $\Omega$ is homeomorphic to $S^1\times(0,1)$,
and so $H^1(\Omega)\neq 0$. Hence $T(f)$ is infinite.
\end{ex}

\begin{cor}\label{plik_7_2} 
If    $\Omega\neq \emptyset$  and  the  Euler-Poincar\'e characteristic  $\chi(\Omega)\leq 0$,  then $T(f)$ is infinite.
\end{cor}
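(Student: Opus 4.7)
The plan is to reduce this to Proposition \ref{plik_7_1} by a short Euler characteristic argument. First I would record that $\Omega = S^{n-1} \cap \{\omega < 0\}$ is an open semi-analytic subset of $S^{n-1}$; by standard triangulation results for semi-analytic sets, $\Omega$ has the homotopy type of a finite CW complex, so all its rational cohomology groups are finite dimensional and the Euler-Poincaré characteristic
\[
\chi(\Omega) \;=\; \sum_{i \geq 0} (-1)^i \operatorname{rank} H^i(\Omega)
\]
is a finite integer.

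Next I would split off the degree-zero term:
\[
\chi(\Omega) \;=\; \operatorname{rank} H^0(\Omega) + \sum_{i \geq 1} (-1)^i \operatorname{rank} H^i(\Omega).
\]
Since $\Omega \neq \emptyset$, we have $\operatorname{rank} H^0(\Omega) \geq 1$. Hence if $H^i(\Omega) = 0$ for every $i \geq 1$, the sum would reduce to $\operatorname{rank} H^0(\Omega) \geq 1$, contradicting the hypothesis $\chi(\Omega) \leq 0$. Therefore there must exist some $i \geq 1$ with $H^i(\Omega) \neq 0$.

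Finally, applying Proposition \ref{plik_7_1} to this index $i$ gives that $T(f)$ is infinite, which is the desired conclusion. There is no real obstacle here; the only point that requires a sentence of justification is the finite-dimensionality of $H^*(\Omega)$, which follows from $\Omega$ being a semi-analytic subset of the sphere (equivalently, $\Omega$ is homotopy equivalent to the compact semi-analytic set $S^{n-1} \cap \{\omega \leq a\}$ used in the proof of Lemma \ref{plik_5_1}, for $0 < -a \ll 1$).
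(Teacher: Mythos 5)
Your argument is correct and is exactly the intended route: the paper leaves this corollary unproved because it follows from Proposition \ref{plik_7_1} by precisely the Euler-characteristic splitting you give (compare also the proof of Theorem \ref{plik_8_2}, which uses the same reduction to the case of vanishing higher cohomology). Your remark on finite-dimensionality of $H^*(\Omega)$, via the homotopy equivalence with the compact set $S^{n-1}\cap\{\omega\leq a\}$ from Lemma \ref{plik_5_1}, is the right justification.
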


\begin{rem}\label{plik_7_1_1} If $\omega$ is a quadratic form which may be reduced
to the diagonal form 
$-x_1^2-\cdots-x_{i+1}^2+x_{i+2}^2+\cdots+x_j^2,$
where $i\geq 1$, then
the dimension of the stable manifold at the origin is at least two.
Hence $T(f)$ is infinite.

\end{rem}

Investigating the gradient flow in polar coordinates and applying arguments presented by Moussu in \cite[p.449]{moussu}
the reader may also prove the next proposition. (As its proof would require to introduce other techniques ,
so we omit it here.)

\begin{prop}\label{plik_7_3}
Suppose that  there exists a non-degenerate critical point of $\omega|\Omega$
which is not a local minimum. Then $T(f)$ is infinite. 

In particular, if there exists a non-degenerate local maximum of $\omega|\Omega$
then the interior of the stable set of the origin is non-empty.
\end{prop}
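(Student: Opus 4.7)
The plan is to pass to polar coordinates $x = r\theta$ with $r=|x|$, $\theta\in S^{n-1}$, rescale time so as to extend the flow smoothly across $r=0$, and then invoke the stable manifold theorem at the hyperbolic equilibrium corresponding to $p$.

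Writing $f = \omega + g$ with $\omega$ homogeneous of degree $d$ and $g(x)=O(|x|^{d+1})$, I would first express the gradient flow in polar coordinates. Using $\dot r = \partial_r f$ and $r\dot\theta = \nabla' f$, a direct computation (expanding $g$ in homogeneous components) gives
\[
\dot r = d\,r^{d-1}\omega(\theta) + r^{d}\,\alpha(r,\theta),\qquad \dot\theta = r^{d-1}\bigl(\nabla_\theta\omega(\theta) + r\,\beta(r,\theta)\bigr),
\]
where $\alpha,\beta$ are analytic in a neighbourhood of $\{0\}\times S^{n-1}$. Reparametrizing time by $ds/dt = r^{d-1}$ yields
\[
r' = d\,\omega(\theta)\,r + O(r^{2}),\qquad \theta' = \nabla_\theta\omega(\theta) + O(r),
\]
a smooth (in fact analytic) system on $[0,r_0)\times S^{n-1}$ with $\{r=0\}$ invariant, whose orbits in $\{r>0\}$ coincide (as sets) with those of $\dot x = \nabla f(x)$.

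Next I would show that $(0,p)$ is a hyperbolic equilibrium of the rescaled system. Since $p$ is a critical point of $\omega|S^{n-1}$, one has $\nabla_\theta\omega(p)=0$; and $\omega(p)<0$ because $p\in\Omega$. The Jacobian of the rescaled vector field at $(0,p)$ is block triangular with diagonal blocks $d\,\omega(p)$ and $H_p$, the Hessian of $\omega|S^{n-1}$ at $p$. By the non-degeneracy hypothesis $H_p$ has no zero eigenvalue, and $d\,\omega(p)<0$, so the equilibrium is hyperbolic. Let $k\geq 1$ be the number of negative eigenvalues of $H_p$; here $k\geq 1$ precisely because $p$ is not a local minimum. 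By the Hadamard--Perron stable manifold theorem there is a smooth invariant stable manifold $W^{s}$ at $(0,p)$ of dimension $1+k\geq 2$. Each point of $W^{s}\cap\{r>0\}$ lies on an orbit converging to $(0,p)$, which under $(r,\theta)\mapsto r\theta$ yields a trajectory of $\dot x = \nabla f(x)$ converging to the origin. Since $\dim W^{s}\geq 2$, modding out by the flow direction still leaves an at least one-parameter family of distinct orbits, so $T(f)$ is uncountable.

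For the \emph{in particular} clause, if $p$ is a non-degenerate local maximum of $\omega|\Omega$ then $H_p$ is negative definite, so $k = n-1$ and $W^{s}$ has dimension $n$. Hence $W^{s}$ is open in $[0,r_0)\times S^{n-1}$, and its image under $(r,\theta)\mapsto r\theta$ is an open subset of $\R^{n}$ contained in $S(f)$, so $\operatorname{int} S(f)\neq\emptyset$.

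The main technical obstacle is the first step: verifying that the polar vector field, after time rescaling by $r^{d-1}$, extends analytically across $r=0$, and identifying the precise block structure of the linearization at $(0,p)$ so that its eigenvalues are $d\,\omega(p)$ together with those of $H_{p}$. Once those computations are in place, the argument reduces to the standard theory of hyperbolic equilibria, exactly in the spirit of Moussu's analysis on p.~449 of \cite{moussu}.
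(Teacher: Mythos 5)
Your proposal follows exactly the route the paper indicates (and deliberately omits): pass to polar coordinates, divide out a power of $r$ to get an analytic vector field on $[0,r_0)\times S^{n-1}$ with $\{r=0\}$ invariant, and apply the stable manifold theorem at the hyperbolic equilibrium $(0,p)$, i.e.\ Moussu's argument; the hyperbolicity, dimension count $1+k\geq 2$, and the full-dimensional stable manifold in the local-maximum case are all correct. One bookkeeping slip: since $\dot\theta=r^{-2}\nabla_\theta F(r,\theta)$ with $F(r,\theta)=f(r\theta)$, the angular equation is $\dot\theta=r^{d-2}\bigl(\nabla_\theta\omega(\theta)+r\beta(r,\theta)\bigr)$ and the time reparametrization must be by $r^{d-2}$, not $r^{d-1}$ (with your stated exponents the radial equation after rescaling would be $r'=d\,\omega(\theta)+O(r)$ and $\{r=0\}$ would not be invariant); with that correction the rescaled system you display, and hence the rest of your argument, is exactly right.
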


\begin{ex}\label{ex_7_4} Let $f(x,y)=x^3+3xy^2+x^2 y^2$, so that $\omega=x^3+3xy^2$. 
It is easy to see that $\omega|S^1$ has a non-degenerate  local maximum at $(-1,0)\in\Omega$.
Then the interior of the stable set of the origin is non-empty.
In particular $T(f)$ is infinite.
\end{ex}

The next theorem is the main result of this section.

\begin{theorem}\label{plik_8_1}
Suppose that $f:\R^n,0\rightarrow\R,0$ is an analytic function having
a critical point at the origin

If $\rank\,H^0(S_r)<\rank\, H^0(\Omega)$, i.e. the number of connected components of $S_r$ 
is smaller than the number of connected components of $\Omega$, then
the set of trajectories of the gradient flow $\dot{x}=\nabla f(x)$ converging to the origin
is infinite.
\end{theorem}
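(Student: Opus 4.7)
My plan is to reduce the claim to a counting argument about connected components of two nested compact subsets of the real Milnor fibre $F_r(y)$, namely the trace $\Gamma(f)$ of the whole set $T(f)$ and the sub-trace $\tilde{\Gamma}^{d,a}(f)$ corresponding to trajectories whose characteristic pair is lexicographically smaller than $(d,a)$ for small $|a|$.

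First I would fix $a<0$ with $|a|$ so small that $(d,a)\notin L'(f)$ (possible because $L'(f)$ is finite) and small enough that Lemma \ref{plik_5_1} and Proposition \ref{plik_7_1} apply; then pick $0<-y\ll r\ll 1$ so that Theorem \ref{plik_2_1} applies. By Corollary \ref{plik_2_3}, the number of connected components of the compact set $\Gamma(f)$ equals $\rank H^{0}(S_r)$. By Proposition \ref{plik_7_1} (combined with Lemma \ref{plik_5_1} inside it), the number of connected components of $\tilde{\Gamma}^{d,a}(f)$ equals $\rank H^{0}(\Omega)$. Since $\tilde{\Gamma}^{d,a}(f)$ is, by its very definition as a union of some of the sets $\Gamma^{\ell',a'}(f)$, a subset of $\Gamma(f)$, the hypothesis $\rank H^{0}(S_r)<\rank H^{0}(\Omega)$ is exactly the statement that $\tilde{\Gamma}^{d,a}(f)$ has strictly more connected components than the compact set $\Gamma(f)$ containing it.

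The conclusion then follows by contrapositive. If $\Gamma(f)$ were finite as a set, then as a finite Hausdorff subspace of $F_r(y)$ it would be discrete, so every subset — in particular $\tilde{\Gamma}^{d,a}(f)$ — would have at most $|\Gamma(f)|$ connected components. This contradicts the strict inequality established above, so $\Gamma(f)$ must be infinite. Because each trajectory in $T(f)$ meets $F_r(y)$ transversally at a unique point of $\Gamma(f)$, the set $T(f)$ is infinite as well (in fact of cardinality continuum, by Remark \ref{plik_2_1_1}).

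I do not foresee a serious obstacle: the hard analytic content — {\L}ojasiewicz's theorem, Kurdyka--Mostowski--Parusi\'nski finiteness of $L'(f)$, and the isomorphism $\bar{H}^{*}(\Gamma(f))\simeq H^{*}(F_r(y))$ — is already packaged in the earlier results. The only point requiring a line of care is that $\bar{H}^{0}$ with rational coefficients really counts the connected components of the compact sets involved, so that the cohomological strict inequality translates into the set-theoretic strict inequality of components needed for the finite-discrete contradiction.
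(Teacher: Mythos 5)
Your proposal is correct and follows essentially the paper's own route: identify $\operatorname{rank}H^0(S_r)$ with the component/point count of $\Gamma(f)$ via Corollary \ref{plik_2_3}, identify $\operatorname{rank}H^0(\Omega)$ with that of the compact subset $\tilde\Gamma^{d,a}(f)$ for $0<-a\ll 1$ with $(d,a)\notin L'(f)$ via Lemma \ref{plik_5_1} (i.e.\ Proposition \ref{plik_7_1}), and then derive a contradiction from $\tilde\Gamma^{d,a}(f)\subset\Gamma(f)$ under the assumption that $T(f)$, hence $\Gamma(f)$, is finite. The paper phrases the final step by counting elements of the finite sets rather than components of a finite discrete space, but this is the same argument.
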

\begin{proof}
Suppoose, contrary to our claim, that $T(f)$ if finite. Then $\Gamma(f)$ is finite, and
 for any $(\ell,a)\in \Q^+\times\R_-\setminus L'(f)$ the set $\tilde\Gamma^{\ell,a}(f)$ is finite too. Hence 
$\rank \bar{H}^0(\tilde\Gamma^{\ell,a}(f))$
equals the number of elements in $\tilde\Gamma^{\ell,a}(f)$.

By Lemma \ref{plik_5_1}, there exist $0<r\ll -a\ll 1$ such that $\Omega$ and  $S_r^{d,a}$ have the same homotopy type.
By Corollary \ref{plik_2_3}, the group $H^*(S_r)$ is isomorphic to $\bar{H}^*(\Gamma(f))$.
Hence $\rank H^0(S_r)$$=\rank \bar{H}^0(\Gamma(f))$
equals the number of elements in $\Gamma(f)$. Moreover,
$\rank H^0(\Omega)$$=\rank H^0(S_r^{d,a})$$=\rank \bar{H}^0(\tilde\Gamma^{d,a}(f))$
equals the number of elements in $\tilde\Gamma^{d,a}(f)$.

As $\tilde\Gamma^{d,a}(f)\subset \Gamma(f)$, then $\rank H^0(\Omega)\leq\rank H^0(S_r)$,
which contradicts the assumption.
\end{proof}

\begin{theorem}\label{plik_8_2}
If $\chi(S_r)<\chi(\Omega)$ then $T(f)$ is infinite.
\end{theorem}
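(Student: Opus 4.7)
The plan is to reduce to Theorem \ref{plik_8_1} by showing that finiteness of $T(f)$ forces the higher cohomology of both $S_r$ and $\Omega$ to vanish, so that the Euler characteristics collapse to the ranks of $H^0$.

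First I would argue by contradiction: suppose $T(f)$ is finite. Then $\Gamma(f)$ is a finite set of points in $F_r(y)$, hence a discrete finite topological space. For such a space, $\bar{H}^0$ has rank equal to the number of points and $\bar{H}^i=0$ for every $i\geq 1$. By Corollary \ref{plik_2_3}, $H^i(S_r)\simeq \bar{H}^i(\Gamma(f))=0$ for $i\geq 1$, and $\rank H^0(S_r)=|\Gamma(f)|$.

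Next I would apply the same reasoning to $\Omega$. Since $L'(f)$ is finite, one can choose $a<0$ with $|a|$ small enough that $(d,a)\notin L'(f)$; then $\tilde{\Gamma}^{d,a}(f)\subset \Gamma(f)$ is also finite. By Proposition \ref{plik_7_1}, $H^*(\Omega)\simeq \bar{H}^*(\tilde\Gamma^{d,a}(f))$, so again $H^i(\Omega)=0$ for $i\geq 1$ and $\rank H^0(\Omega)=|\tilde\Gamma^{d,a}(f)|$. Consequently
\[
\chi(S_r)=\rank H^0(S_r)=|\Gamma(f)|\quad\text{and}\quad \chi(\Omega)=\rank H^0(\Omega)=|\tilde\Gamma^{d,a}(f)|.
\]

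Finally, the inclusion $\tilde\Gamma^{d,a}(f)\subset \Gamma(f)$ gives $|\tilde\Gamma^{d,a}(f)|\leq |\Gamma(f)|$, so $\chi(\Omega)\leq \chi(S_r)$, contradicting the hypothesis $\chi(S_r)<\chi(\Omega)$. No step is really an obstacle here: the entire content is already packaged in Corollary \ref{plik_2_3} and Proposition \ref{plik_7_1}, and the only subtle point worth mentioning is the choice of $a$ with $(d,a)\notin L'(f)$, which is available because $L'(f)$ is finite. In fact this theorem could equally be deduced directly from Theorem \ref{plik_8_1}, since under the finiteness assumption the hypothesis $\chi(S_r)<\chi(\Omega)$ reduces to $\rank H^0(S_r)<\rank H^0(\Omega)$.
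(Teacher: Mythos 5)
Your proposal is correct and follows essentially the same route as the paper: both arguments rest on Corollary \ref{plik_2_3} and Proposition \ref{plik_7_1} to kill the higher cohomology of $S_r$ and $\Omega$ when $T(f)$ is finite, after which $\chi$ reduces to the rank of $H^0$ and the point count $\tilde\Gamma^{d,a}(f)\subset\Gamma(f)$ (equivalently, Theorem \ref{plik_8_1}) gives the contradiction. The only difference is cosmetic: the paper phrases it as a reduction to Theorem \ref{plik_8_1}, while you inline that counting argument in a proof by contradiction.
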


\begin{proof} By Corollary \ref{plik_2_3} and Proposition \ref{plik_7_1}, it is enough to consider the case
where all cohomology groups $H^i(S_r)$, $H^i(\Omega)$, where $i\geq 1$, are trivial.

If that is the case then
$\rank H^0(S_r)=\chi(S_r)<\chi(\Omega)=\rank H^0(\Omega).$
By Theorem \ref{plik_8_1}, the set $T(f)$ is infinite.
\end{proof}

\begin{ex}\label{plik_8_3} Let $f(x,y)=x^3-y^2$, so that $\omega=-y^2$.
Then $\Omega=\{ (x,y)\in S^1\ |\ -y^2<0    \}= S^1\setminus \{(\pm 1, 0)\}$.
Obviously $\Omega$ has two connected components and $H^i(\Omega)=0$ for any $i\geq 1$.
The function $\omega |\Omega$ has exactly two critical (minimum) points at $(0,\pm 1)$,
so one cannot apply Proposition \ref{plik_7_3}.

As $S_r$ is homeomorphic to an interval, then by Theorem \ref{plik_8_1} the set $T(f)$ is infinite.
\end{ex}

\begin{ex}\label{plik_8_4} Let $f(x,y,z)=xyz-z^4$, so that $\omega=xyz$.
It is easy to see that $\Omega$ is homeomorphic to a disjoint union of four discs,
and $S_r$ is homeomorphic to a disjoint union of two discs. By Theorem \ref{plik_8_1} the set
$T(f)$ is infinite.
\end{ex}

\begin{ex}\label{plik_8_5} Let $f(x,y,z)=xyz+x^4y-2y^4z+3xz^4$,
so that $f$ has an isolated critical point at the origin and $\omega=xyz$.
Applying Andrzej {\L}\c ecki computer program (see \cite{leckiszafraniec1}) we have verified that
the local topological degree of the mapping
$$\R^3,0\,\ni (x,y,z)\ \mapsto\ -\nabla f(x,y,z)\, \in\R^3,0$$
equals zero. By \cite{khimshiashvili1}, \cite{khimshiashvili2}, the Euler-Poincar\'e characteristic $\chi(S_r^2\cap\{f\geq 0\})=1-0=1$.
By the Alexander duality theorem $\chi(S_r)=1$. By Theorem \ref{plik_8_2} the set $T(f)$ is infinite.
\end{ex}

\section{Interior of the stable set.}\label{rozdzial_2}

In this section we shall present simple topological conditions which imply that the interior
of the stable set $S(f)$ has a non-empty interior

The set $\Omega$ is semi-algebraic, hence $\operatorname{rank}H^{n-2}(\Omega)<\infty$.
By Theorem \ref{plik_2_1} and Proposition \ref{plik_7_1}, if $0<-a\ll 1$ then $\tilde{\Gamma}^{d,a}(f)$
is compact and \\$\operatorname{rank}\bar{H}^{n-2}(\tilde{\Gamma}^{d,a}(f))<\infty$.

\begin{rem}\label{plik_7_1_1} If $\omega$ is a quadratic form which can be reduced
to the diagonal form 
$-x_1^2-\cdots-x_{i+1}^2+x_{i+2}^2+\cdots+x_j^2,$
where $i\geq 1$, then
$$\bar{H}^*(\tilde{\Gamma}^{d,a}(f))\ \simeq\ H^*(\Omega)\ \simeq H^*(S^i).$$

In that case
$\operatorname{rank}\bar{H}^{n-2}(\tilde{\Gamma}^{d,a}(f))=\operatorname{rank}H^{n-2}(S^i)>0$ if and only if
$\omega$ can be reduced to the diagonal form $-x_1^2-\cdots-x_{n-1}^2$.
\end{rem}

The next theorem is the main result of this section.

\begin{theorem}\label{teor_1}
Suppose that $f:{\mathbb R}^n,0\rightarrow{\mathbb R},0$, $n\geq 2$, is an analytic function defined
in an open neighbourhood of the origin. Suppose that 
$\operatorname{rank}H^{n-2}(S_r)<\operatorname{rank}H^{n-2}(\Omega)$.
Then the stable set of the origin of the gradient flow $\dot{x}=\nabla f(x)$ 
has a non-empty interior.
\end{theorem}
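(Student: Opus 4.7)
The plan is to apply Corollary~\ref{cor_1} to the nested compact pair $\tilde\Gamma^{d,a}(f) \subset \Gamma(f)$ sitting inside the real Milnor fibre $F_r(y)$, and then to sweep the resulting open subset of $F_r(y)$ along the gradient flow to produce an open subset of $S(f)$.

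First, since $L'(f)$ is finite, I would choose $0 < -a \ll 1$ with $(d,a) \notin L'(f)$, and then $0 < -y \ll r \ll 1$ so small that Theorem~\ref{plik_2_1} applies and Lemma~\ref{plik_6_2_1} gives $\tilde\Gamma^{d,a}(f) \subset F^{d,a}(y) \subset F_r(y)$; both $\tilde\Gamma^{d,a}(f)$ and $\Gamma(f)$ are then compact in the $(n-1)$-dimensional manifold $F_r(y)$. Combining Corollary~\ref{plik_2_3} and Proposition~\ref{plik_7_1} one has $\bar{H}^{*}(\Gamma(f)) \simeq H^{*}(S_r)$ and $\bar{H}^{*}(\tilde\Gamma^{d,a}(f)) \simeq H^{*}(\Omega)$, both with finite rank in degree $n-2$, so the hypothesis of the theorem rewrites as
$$\operatorname{rank}\bar{H}^{n-2}(\Gamma(f)) \;<\; \operatorname{rank}\bar{H}^{n-2}(\tilde\Gamma^{d,a}(f)) \;<\; \infty.$$

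Next, Corollary~\ref{cor_1} applied with $L = \tilde\Gamma^{d,a}(f)$, $K = \Gamma(f)$, $F = F_r(y)$ provides a non-empty open subset $U$ of $F_r(y)$ contained in $\Gamma(f)$. After shrinking $U$ to lie in the manifold-interior of $F_r(y)$, the transversality of the regular fibre $F_r(y)$ to $\nabla f$ gives, for some $\epsilon > 0$, a flow diffeomorphism $U \times (-\epsilon,\epsilon) \to W$ onto an open tube $W \subset \R^n$. Every trajectory through a point of $U \subset \Gamma(f)$ converges to the origin, hence each point of $W$ lies in $S(f)$, and therefore $\operatorname{int} S(f) \neq \emptyset$.

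The main obstacle is justifying the ambient-manifold hypothesis of Corollary~\ref{cor_1}: the Milnor fibre $F_r(y)$ is a compact $(n-1)$-manifold with boundary but, for $n \geq 3$, need not embed globally into $S^{n-1}$. The natural remedy is to transport the pair $\tilde\Gamma^{d,a}(f) \subset \Gamma(f)$ onto a pair of closed subsets of the sphere $S_r^{n-1}$ via an Ehresmann-type trivialisation of the gradient flow analogous to those used in Section~\ref{rozdzial_1}, preserving \v{C}ech--Alexander--Spanier cohomology and the inclusion; the conclusion then follows from Lemma~\ref{lem_1} applied directly on $S^{n-1}$.
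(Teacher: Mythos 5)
Your overall strategy is exactly the paper's: take the nested compact pair $\tilde\Gamma^{d,a}(f)\subset\Gamma(f)$ inside the real Milnor fibre $F_r(y)$, identify $\operatorname{rank}\bar{H}^{n-2}(\Gamma(f))=\operatorname{rank}H^{n-2}(S_r)$ and $\operatorname{rank}\bar{H}^{n-2}(\tilde\Gamma^{d,a}(f))=\operatorname{rank}H^{n-2}(\Omega)$ via Corollary \ref{plik_2_3} and Proposition \ref{plik_7_1}, apply Corollary \ref{cor_1} to get $\operatorname{int}_{F_r(y)}\Gamma(f)\neq\emptyset$, and then use transversality of the converging trajectories to $F_r(y)$ to sweep this open set into an open subset of $S(f)$. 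All of that matches the published proof.

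The problem is precisely the point you yourself flag as ``the main obstacle'' and then leave unresolved. The hypothesis of Corollary \ref{cor_1} (that the ambient manifold $F$ be homeomorphic to a subset of $S^{n-1}$) is not an obstacle requiring a new construction: the paper closes it in one line by invoking \cite[Lemma 5.10]{milnor}, which says that for $0<-y\ll r\ll 1$ the fibre $F_r(y)$ is homeomorphic to an $(n-1)$-dimensional submanifold of $S_r^{n-1}$; so Corollary \ref{cor_1} applies directly with $F=F_r(y)$, and no transport of the pair is needed. Your proposed substitute --- carrying $\tilde\Gamma^{d,a}(f)\subset\Gamma(f)$ onto the sphere ``via an Ehresmann-type trivialisation of the gradient flow analogous to those used in Section \ref{rozdzial_1}'' --- is not justified as stated: the trivialisations in that section are of the functions $|x|$ and $f$ on the sets $V^{\ell,a}$, not of the gradient flow, and a flow-based transport would itself require proving that backward trajectories through points of $\Gamma(f)$ leave $B_r^n$, meet $S_r^{n-1}$ transversally at a first exit time, and that the resulting exit map is a topological embedding preserving the inclusion of the pair and its \v{C}ech--Alexander--Spanier cohomology; none of this is supplied, and it is considerably harder than the fact it is meant to replace. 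With the Milnor citation inserted in place of that sketch, your argument becomes the paper's proof; without it, the proof is incomplete at its decisive step.
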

\begin{proof}
By \cite[Lemma 5.10]{milnor}, if $0<-y\ll r\ll 1$ then the Milnor number $F_r(y)$ is homeomorphic to 
an $(n-1)$-dimensional submanifold of $S_r^{n-1}$. 

As $\tilde{\Gamma}^{d,a}(f)\subset\Gamma(f)$ are compact subsets of $F_r(y)$ with
$\operatorname{rank}\bar{H}^{n-2}(\Gamma(f))=\operatorname{rank}H^{n-2}(S_r)<
\operatorname{rank}H^{n-2}(\Omega)=\operatorname{rank}\bar{H}^{n-2}(\tilde{\Gamma}^{d,a}(f))<\infty,$
then by Corollary \ref{cor_1} the set $\Gamma(f)$ has a non-empty interior in $F_r(y)$.

Trajectories of the flow $\dot{x}=\nabla f(x)$ converging to the origin cut transversally $F_r(y)$ at point of $\Gamma(f)$.
Hence the stable set of the origin has a non-empty interior.
\end{proof}

Put $\Omega'=S^{n-1}\cap \{\omega\geq 0\}=S^{n-1}\setminus\Omega$, and $S_r'=S_r^{n-1}\cap \{f\geq 0\}=S_r^{n-1}\setminus S_r$, $0<r\ll 1$.
Sets $\Omega'$, $S_r'$ are compact and semianalytic, hence they are triagulable. By the Alexander duality theorem,
if $S_r'$ and $\Omega'$ are non-empty then
$\operatorname{rank}H_0(S_r')=1+\operatorname{rank}H^{n-2}(S_r)$ and
$\operatorname{rank}H_0(\Omega')=1+\operatorname{rank}H^{n-2}(\Omega)$.

\begin{theorem}\label{theor_2}
Suppose that the set $S_r'$ has less connected components than $\Omega'$.
Then the stable set of the origin of the gradient flow $\dot{x}=\nabla f(x)$ 
has a non-empty interior.
\end{theorem}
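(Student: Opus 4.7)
The strategy is to reduce the theorem to Theorem~\ref{teor_1} by means of the Alexander duality identities already recalled in the paragraph preceding the statement. The hypothesis $\operatorname{rank}H_0(S_r')<\operatorname{rank}H_0(\Omega')$ immediately forces $\Omega'\neq\emptyset$, since no space has a negative number of connected components. It remains to split according to whether $S_r'$ is empty or not.

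In the generic case, where both $S_r'$ and $\Omega'$ are non-empty, the two Alexander duality formulas translate the assumption directly into
\[
\operatorname{rank}H^{n-2}(S_r)\;=\;\operatorname{rank}H_0(S_r')-1\;<\;\operatorname{rank}H_0(\Omega')-1\;=\;\operatorname{rank}H^{n-2}(\Omega),
\]
and Theorem~\ref{teor_1} yields non-empty interior of $S(f)$ at once.

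The degenerate case $S_r'=\emptyset$ I would treat by hand. Here $f<0$ on all of $S_r^{n-1}$ for some small $r>0$, so by compactness the origin is a strict local maximum of $f$. A curve-selection argument then shows the origin is an isolated critical point: any analytic arc $\gamma(t)$ of critical points with $\gamma(0)=0$ would satisfy $\frac{d}{dt}f(\gamma(t))=\nabla f(\gamma(t))\cdot\gamma'(t)\equiv 0$, hence $f\circ\gamma\equiv 0$, contradicting strictness. After shrinking $r$ I may assume that $0$ is the only critical point in $B_r^n$. Setting $\eta:=-\max_{S_r^{n-1}}f>0$, the connected component $U$ of $\{x\in B_r^n\mid f(x)>-\eta/2\}$ containing the origin is an open neighborhood of $0$ disjoint from $S_r^{n-1}$. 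Because $f$ is non-decreasing along $\dot x=\nabla f(x)$, every trajectory starting in $U$ stays trapped in $U$, is bounded, and by {\L}ojasiewicz's convergence theorem converges to the unique critical point in $\overline{U}$, namely the origin. Thus $U\subset S(f)$ and $\operatorname{int}S(f)\neq\emptyset$.

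The main obstacle will be this degenerate case: the Alexander duality reformulation breaks down when $S_r'=\emptyset$, and one has to argue directly that a strict local maximum of an analytic function is an attractor for $\dot x=\nabla f$. The ingredients (curve selection to secure isolation, a Lyapunov trap for stability, {\L}ojasiewicz for convergence) are all standard, but none of them is set up explicitly in the preceding sections, so a short self-contained paragraph is needed.
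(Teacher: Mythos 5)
Your proposal is correct and follows essentially the same route as the paper: split into the cases $S_r'=\emptyset$ and $S_r'\neq\emptyset$, translate the hypothesis via Alexander duality into $\operatorname{rank}H^{n-2}(S_r)<\operatorname{rank}H^{n-2}(\Omega)$, and invoke Theorem~\ref{teor_1}. The only difference is that you spell out the degenerate case (strict local maximum $\Rightarrow$ isolated critical point $\Rightarrow$ an open basin of attraction), which the paper simply asserts as obvious; your argument there is sound.
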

\begin{proof}
The set $\Omega'$ is obviously not empty. If $S_r'=\emptyset$ then the origin is a strict local maximum,
and then $\operatorname{int}S(f)\neq\emptyset$. 

Suppose that $S_r'\neq\emptyset$.
Sets $S_r'$, $\Omega'$ are compact, semianalytic. So they are triangulable, and
the number of connected components of $S_r'$ (resp. $\Omega'$) equals the number  
of its path-components which is $\operatorname{rank}H_0(S_r')$
(resp. $\operatorname{rank}H_0(\Omega')$). 

By assumption, 
$\operatorname{rank}H_0(S_r')<
\operatorname{rank}H_0(\Omega')$ and then 
$\operatorname{rank}H^{n-2}(S_r)<\operatorname{rank}H^{n-2}(\Omega)$.
By Theorem \ref{teor_1}, the stable set $S(f)$ has a non-empty interior.
\end{proof}

\begin{ex}\label{plik_8_3_1} Let $f(x,y)=x^3-y^2$ be the same as in Example \ref{plik_8_3}.
Then $\Omega'=\{(-1,0),(1,0)\}$.
As $S_r'$ is homeomorphic to a closed  interval, then by Theorem \ref{theor_2} the interior
of $S(f)$ is non-empty.
\end{ex}

\begin{ex}
Let $f(x,y,z)=-x^2 y^2-z^4+x^5$. Then $\omega=-x^2 y^2-z^4$ and $\Omega'$
consists of four points. It is easy to see that $S_r'$ is homeomorphic to
a disjoint union of a closed disc and two points. By Theorem \ref{theor_2}
the interior of $S(f)$ is non-empty.
\end{ex}

\section{Right-equivalent functions}\label{rozdzial_3}
Let $g:\R^n,0\rightarrow\R,0$ be an analytic function which is right-equivalent to $f$, i.e. there exists
a $C^\infty$-diffeomorphism $\phi:{\mathbb R}^n,0\rightarrow {\mathbb R}^n,0$ defined
in an open neighbourhood of the origin such that $g=f\circ\phi$. Then in particular the derivative
$D\phi(0):{\mathbb R}^n\rightarrow{\mathbb R}^n$ is a linear isomorphism.

Let $\theta$ be the initial homogeneous form associated with $g$, 
let $\Theta=S^{n-1}\cap\{\theta<0\}$, 
and let $\Theta'=S^{n-1}\cap\{\theta\geq 0\}$.
It is easy to see that $\theta=\omega\circ D\phi(0)$. Hence sets 
$\Omega$ and $\Theta$,  as well as $\Omega'$ and $\Theta'$, are homeomorphic.
Then $H^0(\Omega)\simeq H^0(\Theta)$ and
$H_0(\Omega')\simeq H_0(\Theta')$.

 Both $f$ and $g$ are analytic, hence there exists small $r_0>0$ 
such that for each $0<r\leq r_0$ the number of connected components
of $S_r'$ equals the number of connected components of $(B_r^n\setminus\{0\})\cap\{f\geq 0\}$,
and the the number of connected components of $S_r^{n-1}\cap\{g\geq 0\}$ equals
the number of connected components of $(B_r^n\setminus\{0\})\cap\{g\geq 0\}$.
As  $g=f\circ \phi$ then
$(B_{r}^n\setminus\{0\})\cap\{g\geq 0\}$ is homeomorphic to 
$(\phi(B_{r}^n)\setminus\{0\})\cap\{f\geq 0\}$.

There exist $0<r_3<r_2<r_1<r_0$ such that
$\phi(B_{r_3}^n)\subset
B_{r_2}^n\subset \phi(B_{r_1}^n)
\subset B_{r_0}^n$.

The inclusion $(B_{r_3}^n\setminus\{0\})\cap\{g\geq 0\}\subset (B_{r_1}^n\setminus\{0\})\cap\{g\geq 0\}$
is a homotopy equivalence. Hence inclusions
$$(\phi(B_{r_3}^n)\setminus\{0\})\cap\{f\geq 0\}\subset
 (\phi(B_{r_1}^n)\setminus\{0\})\cap\{f\geq 0\},$$
$$(B_{r_2}^n\setminus\{0\})\cap\{f\geq 0\}
\subset ( B_{r_0}^n\setminus\{0\})\cap\{f\geq 0\}$$
are homotopy equivalencies, and then in particular sets $(B_{r_1}^n\setminus\{0\})\cap\{g\geq 0\}$,
$(\phi(B_{r_1}^n)\setminus\{0\})\cap\{f\geq 0\}$ and
$( B_{r_0}^n\setminus\{0\})\cap\{f\geq 0\}$ have the same number of connected components.

Hence sets $S_r'$ and $S_r^{n-1}\cap\{g\geq 0\}$  have the same number of connected components too. By similar arguments, the sets $S_r$ and $S_r^{n-1}\cap\{g<0\}$
have the same number of connected components too.
By Theorems \ref{plik_8_1}, \ref{theor_2} we get

\begin{theorem}\label{cor_4_1}
Suppose that an analytic function $g:\R^n,0\rightarrow \R,0$ is right-equivalent to $f$.
If $S_r$ has less connected components than $\Omega$ then $T(g)$ is infinite.
If $S_r'$ has less connected components than $\Omega'$ then $S(g)$ has a non-empty interior.
\end{theorem}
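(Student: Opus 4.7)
The plan is to reduce the theorem directly to Theorems \ref{plik_8_1} and \ref{theor_2} applied to $g$ in place of $f$; the content of the proof is therefore to show that the two hypotheses and the four relevant component counts transfer from $f$ to $g$ under right-equivalence.

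First I would handle the initial forms. Writing $\phi(x) = Ax + O(|x|^2)$ with $A = D\phi(0)$ a linear isomorphism, a short Taylor expansion using $f = \omega + O(|x|^{d+1})$ and the homogeneity of $\omega$ shows that the initial form of $g = f\circ\phi$ is $\theta(x) = \omega(Ax)$ (also of degree $d$). Hence the map $x \mapsto Ax/|Ax|$ is a homeomorphism from $\Theta = S^{n-1}\cap\{\theta<0\}$ onto $\Omega$, and analogously $\Theta' \cong \Omega'$, so these pairs of sets have equal numbers of connected components.

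Next I would show that, for all sufficiently small $r$, the set $S_r^{n-1}\cap\{g<0\}$ has the same number of connected components as $S_r$, and similarly with $\{\cdot\geq 0\}$ in place of $\{\cdot<0\}$. The key observation is that the local triviality argument behind Lemma \ref{plik_6_1} gives, for every sufficiently small $\rho$, a homotopy equivalence $S_\rho^{n-1}\cap\{f<0\} \simeq (B_\rho^n\setminus\{0\})\cap\{f<0\}$, so the count of components depends only on the punctured-ball intersection. Choosing $0 < r_3 < r_2 < r_1 < r_0$ with $\phi(B_{r_3}^n)\subset B_{r_2}^n\subset \phi(B_{r_1}^n)\subset B_{r_0}^n$ and noting that $\phi$ identifies $(B_{r_i}^n\setminus\{0\})\cap\{g<0\}$ with $(\phi(B_{r_i}^n)\setminus\{0\})\cap\{f<0\}$, a four-term sandwich of inclusions, each a homotopy equivalence by the preceding remark, forces
$$\#\pi_0\bigl(S_{r_3}^{n-1}\cap\{g<0\}\bigr) \;=\; \#\pi_0\bigl(S_{r_0}^{n-1}\cap\{f<0\}\bigr),$$
and the same sandwich works verbatim after replacing $\{f<0\}$ and $\{g<0\}$ by $\{f\geq 0\}$ and $\{g\geq 0\}$.

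With these identifications in hand, each hypothesis of the theorem on $f$ translates into the corresponding hypothesis for $g$: \emph{fewer components of $S_r$ than of $\Omega$} becomes \emph{fewer components of $S_r^{n-1}\cap\{g<0\}$ than of $\Theta$}, and similarly for the primed sets. The conclusions about $T(g)$ and $\operatorname{int}S(g)$ then follow from Theorem \ref{plik_8_1} and Theorem \ref{theor_2} applied to $g$. The main subtlety will be the sandwich step: one must invoke the Lemma \ref{plik_6_1} homotopy equivalence at several different radii simultaneously, which is legitimate because that lemma in fact holds for every sufficiently small radius, not merely for one preselected value.
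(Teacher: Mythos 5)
Your proposal is correct and follows essentially the same route as the paper: identify the initial form of $g$ as $\omega\circ D\phi(0)$ to get $\Theta\cong\Omega$ and $\Theta'\cong\Omega'$, replace the sphere slices by punctured-ball slices via local triviality, use the sandwich $\phi(B_{r_3}^n)\subset B_{r_2}^n\subset\phi(B_{r_1}^n)\subset B_{r_0}^n$ to equate the component counts for $f$ and $g$, and then invoke Theorems \ref{plik_8_1} and \ref{theor_2} for $g$. One small caveat: the three consecutive (mixed) inclusions in your sandwich are not individually justified as homotopy equivalences by the local-triviality remark, since that remark only covers inclusions between two round balls (for $f$) or between two $\phi$-images of round balls (for $g$); what it does give is that the two alternate composite inclusions are homotopy equivalences, and the equality of the $\pi_0$-counts of the intermediate sets then follows by the standard interleaving argument, which is precisely how the paper concludes.
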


The next example demonstrates that the assumptions of Theorem \ref{cor_4_1} are significant.

\begin{ex}
Let $f(x,y)=x^3+3xy^2$, so that $S_r'$ and $\Omega'$ are homeomorphic. The same way as in Example \ref{ex_7_4} one can show that the interior
of $S(f)$ is non-empty. The function $g(x,y)=f(\sqrt{3}x,y)=3\sqrt{3}(x^3+xy^2)$ is right-equivalent
to $f$. Applying the polar coordinates one can show that $S(g)$ consists of a single trajectory,
so that its interior is empty.
\end{ex}

In the case where $g$ has an algebraically isolated critical point at the origin, one can compute its Milnor number
$\mu(g)=\dim_{\R}\R[[x_1,\ldots,x_n]]/\langle\partial g\rangle$, where $\langle \partial g\rangle$
is the ideal in $\R[[x_1,\ldots,x_n]]$ generated by $\partial g/\partial x_1,\ldots,\partial g/\partial x_n$ (see \cite{milnor}).

\begin{theorem}\label{ostatnietwierdzenie}
Let $g:\R^n,0\rightarrow\R,0$ be an analytic function having an algebraically isolated critical point
at the origin.
Suppose that $\mu(g)$ is even, and $\theta$ is a quadratic form which can be reduced to the diagonal form $-x_2^2-\cdots-x_n^2$.
Then the interior of $S(g)$ is non-empty.
\end{theorem}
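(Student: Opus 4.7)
The plan is to reduce $g$ to an explicit normal form via the splitting lemma for corank-one singularities, read off what ``$\mu(g)$ even'' means on that form, and conclude through Theorem \ref{cor_4_1}. First I would invoke the analytic (or $C^\infty$) splitting lemma: since the Hessian of $g$ at $0$ has rank $n-1$ with non-degenerate part equivalent to $-x_2^2-\cdots-x_n^2$, composing the splitting diffeomorphism with a suitable linear change of variables produces a local analytic diffeomorphism $\phi:\R^n,0\rightarrow\R^n,0$ with
$$\tilde g(x) := g(\phi(x)) = -x_2^2-\cdots-x_n^2 + h(x_1),$$
for some analytic $h:\R,0\rightarrow\R,0$ satisfying $h'(0)=h''(0)=0$.

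Next I would read $\mu(g)$ off the split form. The gradient ideal of $\tilde g$ is $\langle h'(x_1), x_2,\ldots, x_n\rangle$, so $\mu(g)=\mu(\tilde g)=\dim_{\R}\R[[x_1]]/\langle h'(x_1)\rangle$. Writing $h(x_1)=b\,x_1^k+O(x_1^{k+1})$ with $b\neq 0$ and $k\geq 3$ gives $\mu(g)=k-1$; the hypothesis that $\mu(g)$ is even therefore forces $k$ to be odd, so $h$ changes sign at $0$. After possibly replacing $x_1$ by $-x_1$ (a linear right-equivalence preserving the shape of $\tilde g$) one may assume $h>0$ on $(0,\varepsilon)$ and $h<0$ on $(-\varepsilon,0)$.

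The decisive step is to count the connected components of $S_r^{n-1}\cap\{\tilde g\geq 0\}$ and compare them with $\Theta'=\Omega'(\tilde g)=\{(\pm 1,0,\ldots,0)\}$, which has two components. On $S_r^{n-1}$ the identity $x_2^2+\cdots+x_n^2=r^2-x_1^2$ converts $\tilde g\geq 0$ into $\psi(x_1)\geq r^2$, where $\psi(x_1):=x_1^2+h(x_1)$. Since $\psi'(x_1)=2x_1+h'(x_1)=2x_1(1+O(x_1))$, for small $r$ the function $\psi$ has a strict minimum at $0$ and is strictly monotone on each of $[-r,0]$ and $[0,r]$; the sign analysis of $h$ then yields $\psi(-r)<r^2<\psi(r)$. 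Hence $\{\psi\geq r^2\}\cap[-r,r]=[x_1^+,r]$ for a unique $x_1^+\in(0,r)$, so $S_r^{n-1}\cap\{\tilde g\geq 0\}$ is a single spherical cap -- in particular connected. Since $g$ is right-equivalent to $\tilde g$ and the $\tilde g$-version of $S_r'$ has strictly fewer components than $\Theta'$, Theorem \ref{cor_4_1} delivers $\operatorname{int}\,S(g)\neq\emptyset$. The main obstacle I foresee is invoking the splitting lemma in the analytic (or at least $C^\infty$) category for corank-one isolated singularities so that the resulting diffeomorphism is admissible for Theorem \ref{cor_4_1}; this is classical, after which the remainder is an elementary sphere-slicing argument.
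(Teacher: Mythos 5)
Your proposal is correct and follows essentially the paper's route: the paper reduces $g$ by singularity-theoretic normal form methods (Br\"ocker--Lander) to $x_1^k-x_2^2-\cdots-x_n^2$ with $k=\mu(g)+1$ odd, notes that $S_r'$ is then a single cap while $\Omega'$ is two points, and applies Theorem \ref{cor_4_1}, exactly as you do, with your sphere-slicing of $\psi(x_1)=x_1^2+h(x_1)$ merely replacing the final one-variable normal-form step. The only point to make explicit is that the splitting must be performed in the analytic category (or one must pass to the polynomial normal form as the paper does), since Theorem \ref{cor_4_1} requires the reference function $\tilde g$ to be analytic, not just $C^\infty$.
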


\begin{proof} Applying standard methods of the singularities theory (see \cite{brockerlander})
one can show that $g$ is right-equivalent to $f=x_1^k -x_2^2-\ldots-x_n^2$, where $k=\mu(g)+1$. Then
$S_r'$ is homeomorphic to a closed $(n-2)$-dimensional closed ball and $\Omega'$ consists of two points. 
By Theorem \ref{cor_4_1}, the set $S(g)$ has a non-empty interior.
\end{proof}

\begin{ex} Let $g(x,y,z,w)= x^5+z^5+2zw-x^2-y^2-z^2-w^2-2xyz-y^2z^2$. In this case $\mu(g)=4$, and
$\theta$ can be reduced to the diagonal form
$-y^2-z^2-w^2$.  By Theorem \ref{ostatnietwierdzenie}, the set $S(g)$ has a non-empty interior.
\end{ex}

Zbigniew SZAFRANIEC\\
Institute of Mathematics, University of Gda\'nsk\\
80-952 Gda\'nsk, Wita Stwosza 57, Poland\\
Zbigniew.Szafraniec@mat.ug.edu.pl\\

\end{document}